\newcommand{\mm}{\bar{M}_2}
\newcommand{\mone}{\bar{M}_{1,1}}
\title[The Integral Chow Ring of $\mm$]{\boldmath The Integral Chow Ring of $\mm$}
\begin{document}
\maketitle

\begin{abstract}
In this paper we compute the Chow ring of
the moduli stack $\mm$ of stable curves of genus~$2$
with integral coefficients.
\end{abstract}

\section{Introduction}

A fundamental problem in the theory of algebraic curves is to understand
the geometry of the moduli stack $\bar{M}_g$ of stable curves of genus $g$.
One of the first questions we might ask about the geometry of these spaces is:

\begin{quest}
What is the Chow ring $\chow^*(\bar{M}_g)$ of $\bar{M}_g$?  
\end{quest}

Despite much progress over the past half century,
answers to this question are only known after making
various simplifications --- and even then only in small genus.

One such simplification is to study the Chow ring with rational coefficients
$\chow^*(\bar{M}_g) \otimes \qq$;
this removes the subtle torsion phenomenon that exist due to the presence
of loci of curves with automorphisms.
For example, the Chow ring of $\bar{M}_g$ with rational coefficients
is known for $g = 2$ by work of Mumford \cite{enum},
and for $g = 3$ by work of Faber \cite{faber}.

Another such simplification is to replace $\bar{M}_g$
with a simpler but related space, e.g.\ $M_g$ or $\bar{M}_{0, n}$ (with $n \geq 3$) or $\bar{M}_{1, n}$
(with $n \geq 1)$.
For example, the full Chow ring (i.e.\ with integral coefficients) of $M_g$
is known for $g = 2$ by work of Vistoli \cite{vistoliM2}.

However, to date, the full Chow ring $\chow^*(\bar{M}_g)$
is not known in a single case.
The goal of the present paper is to give the first such example:

\begin{thm} \label{thm:main} Over any base field of characteristic
distinct from $2$ and $3$, the Chow ring of the moduli space of stable curves
of genus $2$ is given by
\[\chow^*(\mm) = \zz[\lambda_1, \lambda_2, \delta_1]/(24 \lambda_1^2 - 48 \lambda_2, 20 \lambda_1 \lambda_2 - 4 \delta_1 \lambda_2, \delta_1^3 + \delta_1^2 \lambda_1, 2\delta_1^2 + 2\delta_1 \lambda_1),\]
where $\lambda_1$ and $\lambda_2$ denote the Chern classes of the Hodge bundle,
and $\delta_1$ denotes the class of the boundary substack with a disconnecting
node.

In this basis, the class of the boundary substack with a self-node
is given by $\delta_0 = 10 \lambda_1 - 2 \delta_1$.
\end{thm}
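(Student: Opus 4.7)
My strategy is to combine Vistoli's computation of $\chow^*(M_2)$ with a stratum-by-stratum analysis of the boundary $\Delta = \Delta_0 \cup \Delta_1$, assembled via the excision exact sequence
\[
\chow^*(\Delta) \xrightarrow{i_*} \chow^*(\mm) \xrightarrow{j^*} \chow^*(M_2) \to 0
\]
for the open immersion $j \colon M_2 \hookrightarrow \mm$. Since $j^*\delta_1 = 0$ while the classes $\lambda_i$ are globally defined on $\mm$, the problem reduces to: (a) showing that every class supported on $\Delta$ can be written as a polynomial in $\lambda_1, \lambda_2, \delta_1$, and (b) identifying a complete set of relations among these generators.

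For (a), I would use the natural gluing presentations of the boundary strata. The divisor $\Delta_1$ is the image of the gluing map $\mone \times \mone \to \mm$ attaching two pointed elliptic curves, realizing it as the quotient stack $(\mone \times \mone)/\Sigma_2$. The divisor $\Delta_0$ is the image of a degree-$2$ gluing map from $\bar{M}_{1,2}$, obtained by identifying the two marked points on a one-pointed genus one curve. Using the known integral Chow rings of $\mone$ and of $\bar{M}_{1,2}$, together with the projection formula, I would compute each pushforward $i_{k,*}$ and verify that its image lies inside $\zz[\lambda_1, \lambda_2, \delta_1]$. The self-intersection classes $\delta_k|_{\Delta_k}$ will be identified with the first Chern classes of the normal bundles of the gluing maps, which in turn are expressible in terms of $\psi$-classes on $\mone$ and $\bar{M}_{1,2}$.

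For (b), I expect the four listed relations to arise as follows. The Mumford Grothendieck--Riemann--Roch identity for the universal curve on $\mm$ forces $24\lambda_1^2 = 48\lambda_2$ already integrally. The $\chow^1$-level identity $10\lambda_1 = \delta_0 + 2\delta_1$, which also proves the final assertion $\delta_0 = 10\lambda_1 - 2\delta_1$, comes from lifting Vistoli's torsion relation $10\lambda_1 = 0$ on $M_2$ to $\mm$ and computing coefficients; multiplying by $2\lambda_2$ and using that $\lambda_2$ vanishes on $\Delta_0$ (the Hodge bundle splits off a trivial factor there) yields $20\lambda_1\lambda_2 = 4\delta_1\lambda_2$. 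The two remaining relations $2\delta_1^2 + 2\delta_1\lambda_1 = 0$ and $\delta_1^3 + \delta_1^2\lambda_1 = 0$ should be extracted by pushing forward torsion relations on the double cover $\mone \times \mone$ via $i_{1,*}$, symmetrizing under $\Sigma_2$ and applying the self-intersection formula from the previous paragraph.

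The hardest step will be verifying that these relations are complete, i.e., that the induced surjection from the right-hand presentation of the theorem onto $\chow^*(\mm)$ has trivial kernel. This demands a codimension-by-codimension bookkeeping argument, controlling the interactions of $\Delta_0$ and $\Delta_1$ along $\Delta_0 \cap \Delta_1$ and along the deeper stratum parametrizing stable curves with two nodes, and carefully tracking the $2$- and $3$-torsion arising from the automorphisms of elliptic and hyperelliptic curves. The hypothesis $\mathrm{char} \ne 2, 3$ enters precisely to keep these torsion contributions tractable.
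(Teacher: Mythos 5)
Your proposal has the right skeleton --- a localization sequence, explicit presentations of the strata, GRR-type relations --- but it is missing the idea that carries the entire weight of this theorem. The sequence $\chow^*(\Delta) \to \chow^*(\mm) \to \chow^*(M_2) \to 0$ is only right-exact, and the kernel of the pushforward $i_*$ is the image of the boundary map from the \emph{first higher Chow group} of the open stratum. Your plan defers this to a ``codimension-by-codimension bookkeeping argument,'' but no amount of bookkeeping among the classes you can write down will tell you whether $i_*$ kills something: that is governed by a group you have not proposed to compute. The paper's key finding is that this obstruction is real --- for the open stratum $\mm \smallsetminus \Delta_1$ the first higher Chow groups with $\zz_2$-coefficients do \emph{not} vanish; they have generators in degrees $4$ and $5$, so injectivity of the boundary pushforward genuinely can fail and must be fought for. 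Handling these two classes is where the two novel inputs of the paper enter, neither of which appears in your outline: (i) the universal family of bielliptic genus-$2$ curves, whose Chow ring is computed explicitly and used as a test family to show the classes $\delta_1(\delta_1+\lambda_1)\lambda_1^2$, $\delta_1(\delta_1+\lambda_1)\lambda_2$, $\delta_1(\delta_1+\lambda_1)(\lambda_1^2+\lambda_2)$ are nonzero mod $2$, which forces injectivity of $\chow^3(\Delta_1)\to\chow^4(\mm)$; and (ii) the ``quadratic twisting'' $\mathbb{G}_m$-action, which extends over all of $\mm$ and lets one compute, via a snake-lemma argument, exactly where the degree-$5$ higher Chow class maps under the boundary map. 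Without some substitute for these steps your relations cannot be proven complete, and in fact proving completeness is equivalent to the statements (i) and (ii) address.

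Two secondary problems. First, your stratification choice ($M_2$ versus the full boundary $\Delta$) is strictly harder than the paper's: $\Delta = \Delta_0 \cup \Delta_1$ is a singular union, so $\chow^*(\Delta)$ itself requires a further stratification, and you would need the higher Chow groups of $M_2$ rather than of $\mm \smallsetminus \Delta_1$. The paper cuts only along $\Delta_1$ precisely because the complement (which includes the curves with non-disconnecting nodes) has a clean presentation as binary sextics with no triple root modulo $\gl_2$, to which a variant of Vistoli's argument applies and computes \emph{both} the Chow ring and the first higher Chow groups. Second, your assertion that GRR forces $24\lambda_1^2 = 48\lambda_2$ ``already integrally'' is unjustified: integral refinements of GRR (Pappas) are a characteristic-zero statement, and even in the paper this relation is first obtained only modulo torsion; upgrading it to an exact relation in positive characteristic again uses the injectivity statements that rest on the bielliptic test family. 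The same caveat applies to your derivation of $\delta_0 = 10\lambda_1 - 2\delta_1$, which a priori holds only modulo torsion until one knows $\chow^1(\mm)$ is torsion-free.
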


One might hope to compute the Chow ring of $\mm$ using equivariant
intersection theory \cite{EdGr}, as done for $M_2$ by Vistoli in \cite{vistoliM2}.
Unfortunately, finding a suitable presentation of $\mm$ 
seems extremely difficult.

Instead, we stratify $\mm$ into the boundary
divisor $\Delta_1$ and its complement $\mm \smallsetminus \Delta_1$,
whose Chow rings can be computed using equivariant intersection theory.
This gives us a sequence
\[\chow^*(\Delta_1) \to \chow^*(\mm) \to \chow^*(\mm \smallsetminus \Delta_1) \to 0.\]
One key observation is that a variant of Vistoli's argument in \cite{vistoliM2}
can not only compute $\chow^*(M_2)$, but in fact compute both
the Chow ring and the first higher Chow groups with $\ell$-adic coefficients
(which control the failure of left-exactness in the above sequence)
of $\mm \smallsetminus \Delta_1$.
Unfortunately, the first higher Chow groups of the open stratum $\mm \smallsetminus \Delta_1$
do \emph{not} vanish for $\ell = 2$; they are generated by $2$-torsion classes in degrees $4$ and $5$.

For the degree $4$ generator, we observe that
all the $2$-torsion in $\chow^3(\Delta_1)$ arises, in some sense,
``locally'' from those curves in $\Delta_1$ which admit a bielliptic involution
(exchanging the two components).
Using the universal family of curves with a bielliptic involution as a test family,
we show that the pushforward map $\chow^3(\Delta_1) \to \chow^4(\mm)$ is injective,
and so the image of the degree $4$ generator in $\chow^3(\Delta_1)$ must vanish.

By contrast, the degree $5$ generator can be thought of as arising, in some sense,
``globally'' from the presence of the hyperelliptic involution
on every curve; in particular, it seems extremely difficult to study this class using a test family.
Instead, we get a handle on this class using the action of $\mathbb{G}_m$ on $\mm$
given by taking the ``universal quadratic twist with respect to the hyperelliptic involution''.
This $\mathbb{G}_m$-action kills the degree $5$ generator,
which then corresponds to a ``syzygy'' between two relations of smaller degree
in $\chow^*(\mm / \mathbb{G}_m)$;
using this, we can explicitly compute the 
boundary map on this degree $5$ generator.

\medskip

A brief outline of the remainder of the paper is as follows: We begin in Section~\ref{sec:notation}
by fixing some notation for some group representations
that will appear throughout the paper.
Next, in Section~\ref{sec:pres}, we give presentations of the strata
$\Delta_1$ and $\mm \smallsetminus \Delta_1$,
as quotients of open subsets in affine spaces by linear algebraic groups --- a group we term $G$ defined in the next section for $\Delta_1$,
respectively the group $\gl_2$ for $\mm \smallsetminus \Delta_1$.
In Section~\ref{sec:mult}, we give formulas for pushforward
maps between Chow groups of various projective bundles under
multiplication maps, that will be used throughout the remainder of the paper.
Then in Section~\ref{sec:bg}, we compute $\chow^*(BG)$.
This is enough to show in Section~\ref{genrel}
that $\chow^*(\mm)$ is generated by $\delta_1$, $\lambda_1$, and $\lambda_2$,
as claimed, and to establish all the relations appearing in the statement of Theorem~\ref{thm:main}
in characteristic zero.

Then in Section~\ref{sec:pushpull}, we explicitly compute
the pushforward and pullback maps between Chow groups along $BT \to BG$,
where $T$ is the maximal torus of $G$.
In Sections~\ref{sec:adelta1} and~\ref{sec:au},
we compute the Chow ring of the strata $\Delta_1$, respectively the Chow ring
and first higher Chow groups with $\ell$-adic coefficients
of the strata $\mm \smallsetminus \Delta_1$;
we also derive an explicit formula for the image of the degree $5$
generator mentioned above.
In Section~\ref{sec:tofinite}, we use this to reduce Theorem~\ref{thm:main}
to the nonvanishing of a finite number of classes in $\chow^*(\mm) \otimes \zz/2\zz$.
Finally, in Section~\ref{sec:biell}, we compute the Chow ring of the universal
family of bielliptic curves of genus $2$,
which completes the proof of Theorem~\ref{thm:main}
by serving as a test family to show the nonvanishing of these desired classes.

\subsection{Assumptions on Characteristic} For the remainder of the paper,
we work over a field $k$ of characteristic distinct
from $2$ and $3$.

\subsection{Remark} Upon completion of this manuscript, the author
learned that Angelo Vistoli and Andrea Di Lorenzo are working on
a different approach to this problem, which will hopefully yield an
indepenent proof of Theorem~\ref{thm:main}.

\subsection{Acknowledgements}
First and foremost, the author would like to profusely thank Akhil Mathew
for many extremely fruitful conversations (as well as comments on the manuscript
and assistance locating references).
In these conversations, he suggested several
mathematical insights that were critical to the success of this project; these include
the technique used to calculate $\chow^*(\Delta_1)$
in Section~\ref{sec:adelta1}, and the idea of using
the higher Chow groups of $\mm \smallsetminus \Delta_1$.
This paper would not have existed without his help!

The author would also like to thank Ken Ono, Ravi Vakil, and David Zureick--Brown
for various helpful discussions.
Finally, the author would like to acknowledge the generosity of the
National Science Foundation in supporting this research --- both via
the Research Experience for Undergraduates
program where I began thinking about this problem in 2013,
and via the Mathematical Sciences Postdoctoral Research Fellowship
program that provided recent funding for the completion of this project.

\section{Notation \label{sec:notation}} In this section we fix some notational conventions
that we shall use for the remainder of the paper.

\begin{defi} Write $V$ for the standard representation of $\gl_2$,
and define the representations
\[V_n = \sym^n V^* \tand V_n(m) = V_n \otimes (\det)^{\otimes m}.\]
\end{defi}

\begin{defi} \label{G} We let
\[G = (\mathbb{G}_m \times \mathbb{G}_m) \rtimes \zz/2\zz,\]
where the action
of $\zz/2\zz$ on $\mathbb{G}_m \times \mathbb{G}_m$ permutes the factors.

Given a representation $V$ of $\mathbb{G}_m$, we observe that the direct sum
$V \oplus V$ inherits a natural action of $G$ (where the $\zz/2\zz$-action
permutes the factors); we write $V \bpls V$ for this.
\end{defi}

\begin{defi} Write $L_n$ for the $1$-dimensional representation of $\mathbb{G}_m$
where $z$ acts as multiplication by $z^n$, and let
\[L_{a_1, a_2, \ldots, a_n} = L_{a_1} \oplus L_{a_2} \oplus \cdots \oplus L_{a_n}.\]
Let $W_{a_1, a_2, \ldots, a_n}$ be the representation of $G$ defined by
\[W_{a_1, a_2, \ldots, a_n} = L_{a_1, a_2, \ldots, a_n} \bpls L_{a_1, a_2, \ldots, a_n},\]
and denote $W = W_1$.

Additionally, let $\Gamma$ be the representation of $G$ arising from the
sign representation of the $\zz/2\zz$ quotient of $G$.
\end{defi}

\begin{defi} Write
\begin{align*}
\alpha_i &= c_i(V) \in \chow^*(B \! \gl_2) \\
\beta_i &= c_i(W) \in \chow^*(BG) \\
\gamma &= c_1(\Gamma) \in \chow^*(BG).
\end{align*}
\end{defi}

\section{\label{sec:pres} Presentations of $\Delta_1$ and $\mm \smallsetminus \Delta_1$}

In this section, we give presentations of the boundary stratum $\Delta_1$
of curves with a disconnecting node, and of its complement
$\mm \smallsetminus \Delta_1$, as the quotient of an affine space
by the action of an algebraic group.
These presentations will be used in subsequent sections
to compute the Chow rings of these loci.

\subsection{\label{sec:s} \boldmath A presentation of $\Delta_1$}

Note that the curves parameterized by $\Delta_1$ are in the form
of two elliptic curves glued together at their marked points.
In other words, the stack $\Delta_1$ can be described as the symmetric square
of $\mone$:
\[\Delta_1 \simeq \sym^2 \mone.\]
Recall that
an elliptic curve $E$ can be written in Weierstrass form:
\[y^2 = x^3 + ax + b,\]
which is stable if and only if $a$ and $b$ do not simultaneously vanish.
The fiber of the Hodge bundle here is given by
\[H^0(\omega_E) = \left\langle \frac{dx}{y}\right \rangle.\]
Moreover, the isomorphisms between two curves in
Weierstrass form
are given by
\[(x,y) \mapsto (u^{-2} x, u^{-3} y) \tfor u \in \mathbb{G}_m;\]
the isomorphism given by $u$ defines an isomorphism
between the curves $y^2 = x^3 + a x + b$
and $y^2 = x^3 + u^4 a x + u^6 b$, and acts
on $dx/y$ as multiplication by $u$.

Thus, $\mone \simeq (L_{4,6} \smallsetminus 0) / \mathbb{G}_m$. 
Moreover, the Hodge bundle of $\mone$ is the pullback of the
representation $L_1$ from $B \mathbb{G}_m$.
Consequently, we have
the following fundamental presentation:
\begin{equation} \label{sym2pres} \Delta_1 \simeq \sym^2 \mone \simeq  \left(W_{4,6} \smallsetminus (L_{4,6} \times 0 \cup 0 \times L_{4, 6})\right) / G,
\end{equation}
and the Hodge bundle is the pullback from $BG$ of the representation
$W_1$.

\subsection{\boldmath A presentation of $\mm \smallsetminus \Delta_1$\label{sec:u}}
We first claim that a stable curve $C$ of genus $2$
satisfies $[C] \in \mm \smallsetminus \Delta_1$
if and only if
$H^0(\omega_C)$
is basepoint-free.
Indeed, if $p \in C$ is smooth, then by Riemann--Roch for $\oo_C(p)$,
the point $p$ is a basepoint
of $H^0(\omega_C)$ if and only if $\dim H^0(\oo_C(p)) = 2$,
which is impossible since $C$ is not rational.
If $p \in C$ is singular, then write $\tilde{C}$ for the partial normalization
of $C$ at $p$. By Riemann--Roch for $\oo_{\tilde{C}}$,
the point $p$ is a basepoint
of $H^0(\omega_C)$ if and only if $\dim H^0(\oo_{\tilde{C}}(p)) = 2$,
i.e.\ if and only if $p$ is a disconnecting node.
Thus $H^0(\omega_C)$
is basepoint-free if and only if $C$ has no disconnecting nodes, i.e.\ if
and only if $[C] \in \mm \smallsetminus \Delta_1$.

Consequently, every $[C] \in \mm \smallsetminus \Delta_1$
comes equipped with a canonical map to $\pp^1$;
by the Riemann--Hurwitz formula, this map is branched over $6$ points
with multiplicity.
We conclude that the family of curves in weighted projective space given by
\begin{equation} \label{uuniv}
z^2 = ax^6 + bx^5y + cx^4 y^2 + d x^3 y^3 + e x^2 y^4 + f xy^5 + g y^6,
\end{equation}
over the locus of degree $6$ polynomials
$ax^6 + bx^5y + cx^4 y^2 + d x^3 y^3 + e x^2 y^4 + f xy^5 + g y^6$
with no triple root,
contains every isomorphism class $[C] \in \mm \smallsetminus \Delta_1$.

Moreover, since the map to $\pp^1$ is canonical, we conclude
that any isomorphism between two curves in this family
is via a linear change of variables on $x$ and $y$ and scalar multiplication
on $z$, i.e.\ via the action of $\gl_2 \times \mathbb{G}_m$
with $\gl_2$ acting on $(x, y)$ and $\mathbb{G}_m$ acting on $z$.
By inspection, the subgroup of $\gl_2 \times \mathbb{G}_m$
acting trivially is the image of $\mathbb{G}_m$ under the map
$t \mapsto (t \cdot \mathbf{1}, t^3)$.
We conclude that $U$ is isomorphic to the quotient
of the locus of degree $6$ polynomials
$ax^6 + bx^5y + cx^4 y^2 + d x^3 y^3 + e x^2 y^4 + f xy^5 + g y^6$
with no triple root by the natural action of $(\gl_2 \times \mathbb{G}_m) / \mathbb{G}_m$ constructed above.

To recast this in a somewhat nicer form, we observe that
$(\gl_2 \times \mathbb{G}_m) / \mathbb{G}_m$ is itself isomorphic to $\gl_2$
via the map $\gl_2 \times \mathbb{G}_m \to \gl_2$ defined by
\[A \times t \mapsto \frac{\det A}{t} \cdot A,\]
whose kernel is, by inspection, the image of $\mathbb{G}_m$ under the map
$t \mapsto (t \cdot \mathbf{1}, t^3)$.
An inverse to this map is given by
\[A \mapsto A \times \det A.\]
In particular, it follows that
\[\mm \smallsetminus \Delta_1 \simeq 
(V_6(2) \smallsetminus \left\{\text{forms with triple roots}\right\})/\gl_2.\]
Moreover, since $H^0(\omega_C) = \langle x, y \rangle$ with the natural
action of $\gl_2$,
it follows that the Hodge bundle is the pullback of the representation
$V_1$ from $B\!\gl_2$.

\section{Pushforwards along multiplication maps\label{sec:mult}}

Let $X$ be any stack, and
$\mathcal{E} \to X$ be a two-dimensional vector bundle on $X$.
Write $c_i = c_i(\mathcal{E})$, and $x = c_1(\oo_{\pp \mathcal{E}}(1))$
for the tautological class of the projective bundle $\pp \mathcal{E}$.
In this section we give formulas for the pushforward
maps on Chow groups under multiplication maps, similar to those
obtained by Vistoli in \cite{vistoliM2}.

\begin{defi} \label{skr}
For $j \leq r$,
define the class $s^j_r \in \chow^j (\pp \sym^r \mathcal{E})$
as the pushforward of $x_1 x_2 \cdots x_j$
under the multiplication map $(\pp \mathcal{E})^j \times \pp (\sym^{r - j} \mathcal{E}) \to \pp \sym^r \mathcal{E}$,
where $x_i$ denotes the pullback of $x$ under projection to the $i$th
$\pp \mathcal{E}$-factor.
\end{defi}

\begin{lm} \label{rec} The $s^j_r$ can be calculated via the following
recurrence relation:
\[s^0_r = 1 \tand s^{j+1}_r = (t + jc_1) \cdot s^j_r + j(r+1-j) c_2 \cdot s_r^{j-1}\]
where $t$ is the hyperplane class on $\pp \sym^r \mathcal{E}$.
\end{lm}
\begin{proof}
Write $\pi \colon (\pp \mathcal{E})^r \to \pp \sym^r \mathcal{E}$ for the multiplication map.
As $(\pp \mathcal{E})^{r - j} \to \pp \sym^{r - j} \mathcal{E}$ is degree $(r - j)!$,
we obtain
\[s^j_r = \frac{1}{(r - j)!} \pi_*(x_1 \cdots x_j).\]
By push-pull and the symmetry of $\pi$, this implies
\begin{align*}
t \cdot s_r^j &= \frac{1}{(r - j)!} \pi_*\big(x_1 \cdots x_j \cdot (x_1 + \cdots + x_r)\big) \\
&= \frac{j}{(r - j)!} \pi_*(x_1 \cdots x_{j - 1} x_j^2) + \frac{r - j}{(r - j)!} \pi_*(x_1 \cdots x_{j+1}) \\
&= \frac{j}{(r - j)!} \pi_*\big(x_1 \cdots x_{j - 1} (-c_1 x_j - c_2)\big) + \frac{r - j}{(r - j)!} \pi_*(x_1 \cdots x_{j+1}) \\
&= -\frac{jc_1}{(r - j)!} \pi_*(x_1 \cdots x_j) - \frac{j(r + 1 - j)c_2}{(r - j + 1)!} \pi_*(x_1 \cdots x_{j-1}) + \frac{1}{(r - j - 1)!} \pi_*(x_1 \cdots x_{j+1}) \\
&= -jc_1 \cdot s^j_r - j(r + 1 - j)c_2 \cdot s^{j-1}_r + s^{j+1}_r,
\end{align*}
which yields the desired formulas.
\end{proof}

\begin{lm} \label{basismod}
As an $\chow^*(X)$-module, $\chow^* (\sym^r \mathcal{E})$
is generated by the classes $s_r^0, s_r^1, \ldots, s_r^r$.
\end{lm}
\begin{proof}
From the recurrence relations of Lemma~\ref{rec}, it is clear by induction on $j$ that
that $\zz[c_1, c_2]\langle 1, t, t^2, \ldots, t^j\rangle = \zz[c_1, c_2]\langle s_r^0, s_r^1, s_r^2, \ldots, s_r^j \rangle$.
\end{proof}

The point of choosing this system of generators
is that the pushforward along the
multiplication map takes a particularly nice form
in this basis:

\begin{lm} \label{lm:mult}
The pushforward along the multiplication map
\[\pp \sym^a \mathcal{E} \times \pp \sym^b \mathcal{E} \to \pp \sym^{a + b} \mathcal{E}\]
sends
\[s^\alpha_a \times s^\beta_b \mapsto \binom{a - \alpha + b - \beta}{a - \alpha} \cdot s^{\alpha + \beta}_{a + b}.\]
\end{lm}
\begin{proof} Consider the commutative diagram,
all of whose arrows are multiplication maps
(plus permuting the factors):
\[\begin{CD}
(\pp^1)^\alpha \times \pp^{a - \alpha} \times (\pp^1)^\beta \times \pp^{b - \beta} @>>> (\pp^1)^{\alpha + \beta} \times \pp^{a + b - \alpha - \beta} \\
@VVV @VVV \\
\pp^a \times \pp^b @>>> \pp^{a + b}
\end{CD}\]
Then the left and right sides of the desired equality
are the pushforwards around the left and right sides of the diagram
respectively of the class
$x_1 x_2 \cdots x_\alpha \cdot y_1 y_2 \cdots y_\beta$.
\end{proof}

\begin{lm} \label{diag}
The class of the diagonal $\pp \mathcal{E} \subset \pp \mathcal{E} \times \pp \mathcal{E}$
is given by $x_1 + x_2 + c_1$.
\end{lm}
\begin{proof}
By functoriality, and considering the case when $X = B\!\gl_2$ and $\mathcal{E}$ is the
standard representation,
the desired class must be given by
$a x_1 + b x_2 + c c_1$,
for constants $a$, $b$, and $c$.

When $X$ is a point, this formula asserts that the diagonal
in $\pp^1 \times \pp^1$ has class $(a, b)$; thus, $a = b = 1$.
Moreover, when we tensor $\mathcal{E}$ by a line bundle $\mathcal{L}$
with Chern class $\ell = c_1(\mathcal{L})$,
then the class of the diagonal is unaffected, but $x_i$
are replaced by $x_i - \ell$ while $c_1$ is replaced by $c_1 + 2\ell$.
It follows that
\[(x_1 - \ell) + (x_2 - \ell) + c(c_1 + 2\ell) = x_1 + x_2 + c c_1,\]
and so $c = 1$ as desired.
\end{proof}

\begin{lm} \label{diag3}
The class of the triple diagonal $\pp \mathcal{E} \subset \pp \mathcal{E} \times \pp \mathcal{E} \times \pp \mathcal{E}$
is given by
\[(x_1 x_2 + x_2 x_3 + x_3 x_1) + (x_1 + x_2 + x_3) \cdot c_1 + c_1^2 - c_2.\]
\end{lm}
\begin{proof}
By Lemma~\ref{diag}, the image of the diagonal map
$\pp \mathcal{E} \to \pp \mathcal{E} \times \pp \mathcal{E}$
is $x_1 + x_2 + c_1$
where $x_1$ and $x_1$ are the pullbacks of the hyperplane classes
from both factors of $\pp \mathcal{E}$.
Hence, the image of the triple diagonal map
is given by
\begin{align*}
[\Delta_3] &= (x_1 + x_2 + c_1)(x_2 + x_3 + c_1) \\
&= (x_1 x_2 + x_2 x_3 + x_3 x_1) + (x_1 + x_2 + x_3) \cdot c_1 + c_1^2 + x_2^2 + c_1 x_2 \\
&= (x_1 x_2 + x_2 x_3 + x_3 x_1) + (x_1 + x_2 + x_3) \cdot c_1 + c_1^2 - c_2. \qedhere
\end{align*}
\end{proof}

\begin{lm} \label{ver2}
The pushforward along the symmetric square map
\[\pp \mathcal{E} \to \pp \sym^2 \mathcal{E}\]
is given by
\begin{align*}
s_1^0 &\mapsto 2s_2^1 + 2c_1 \\
s_1^1 &\mapsto s_2^2 - 2c_2
\end{align*}
\end{lm}
\begin{proof}
By Lemma~\ref{diag}, the image of the diagonal map
$\pp \mathcal{E} \to \pp \mathcal{E} \times \pp \mathcal{E}$
is
\[[\Delta] = x_1 + x_2 + c_1,\]
where $x_1$ and $x_1$ are the pullbacks of the hyperplane classes
from both factors of $\pp \mathcal{E}$.
Next, we multiply the above expression by $x_1$, to get
\begin{align*}
x_1 \cdot [\Delta] &= x_1 (x_1 + x_2 + c_1) \\
&= (-c_1x_1 - c_2) + x_1(x_2 + c_1) \\
&= x_1 x_2 - c_2.
\end{align*}
The desired formulas follow immediately.
\end{proof}

\begin{lm} \label{ver}
The pushforward along the symmetric cube map
\[\pp \mathcal{E} \to \pp \sym^3 \mathcal{E}\]
is given by
\begin{align*}
s_1^0 &\mapsto 3s_3^2 + 6c_1 s_3^1 + 6(c_1^2 - c_2) \\
s_1^1 &\mapsto s_3^3 - 6c_2 s_3^1 - 6c_1c_2
\end{align*}
\end{lm}
\begin{proof}
By Lemma~\ref{diag3}, the image of the triple diagonal map
$\pp \mathcal{E} \to \pp \mathcal{E} \times \pp \mathcal{E} \times \pp \mathcal{E}$
is given by
\[[\Delta_3] = (x_1 x_2 + x_2 x_3 + x_3 x_1) + (x_1 + x_2 + x_3) \cdot c_1 + c_1^2 - c_2.\]
Next, we multiply the above expression by $x_1$, to get
\begin{align*}
x_1 \cdot [\Delta_3] &= x_1^2 (x_2 + x_3 + c_1) + x_1 x_2 x_3 + x_1 (x_2 + x_3) c_1 + x_1(c_1^2 - c_2) \\
&= (-c_1 x_1 - c_2) (x_2 + x_3 + c_1) + x_1 x_2 x_3 + x_1 (x_2 + x_3) c_1 + x_1(c_1^2 - c_2) \\
&= x_1 x_2 x_3 - (x_1 + x_2 + x_3) \cdot c_2 - c_1 \cdot c_2.
\end{align*}
The desired formulas follow immediately.
\end{proof}

\begin{lm} \label{lm:sub} Let $\mathcal{V} \subset \mathcal{W}$ be an inclusion of vector bundles
over $X$. Then the fundamental class of $\pp \mathcal{V} \subset \pp \mathcal{W}$
in $\chow^*(\pp \mathcal{W})$ is given by the Chern polynomial
of the quotient bundle $\mathcal{W} / \mathcal{V}$.
That is, if $\dim \mathcal{W}/\mathcal{V} = d$, then
\[[\pp \mathcal{V}] = x^d + c_1(\mathcal{W} / \mathcal{V}) x^{d-1} + \dots + c_d(\mathcal{W} / \mathcal{V}),\]
where $x = c_1( \mathcal{O}_{\pp \mathcal{W}}(1))$.
\end{lm}
\begin{proof}
The composition $\mathcal{O}_{\pp \mathcal{W}}(-1) \to \pi^*\mathcal{W} \to \pi^*(\mathcal{W}/\mathcal{V})$,
where $\pi \colon \mathcal{W} \to X$ denotes the structure map,
gives a section of $\pi^*(\mathcal{W} / \mathcal{V}) \otimes \mathcal{O}_{\pp \mathcal{W}}(1)$
whose vanishing locus is
$\pp \mathcal{V} \subset \pp \mathcal{W}$.
Thus
\[[\pp \mathcal{V}] = c_d(\pi^*(\mathcal{W} / \mathcal{V}) \otimes \mathcal{O}_{\pp \mathcal{W}}(1)) = x^d + c_1(\mathcal{W} / \mathcal{V}) x^{d-1} + \dots + c_d(\mathcal{W} / \mathcal{V}). \qedhere\]
\end{proof}

\begin{lm} \label{lm:segre}
For rank~$2$ vector bundles $\mathcal{E}_1$ and $\mathcal{E}_2$ on $X$,
the pushforward along the Segre map
\[\pp \mathcal{E}_1 \times \pp \mathcal{E}_2 \to \pp(\mathcal{E}_1 \otimes \mathcal{E}_2)\]
is given by
\begin{align*}
1 &\mapsto 2x + c_1(\mathcal{E}_1) + c_1(\mathcal{E}_2) \\
x_1 &\mapsto x^2 + c_1(\mathcal{E}_2) x + c_2(\mathcal{E}_2) - c_2(\mathcal{E}_1) \\
x_2 &\mapsto x^2 + c_1(\mathcal{E}_1) x + c_2(\mathcal{E}_1) - c_2(\mathcal{E}_2) \\
x_1 x_2 &\mapsto x^3 + (c_1(\mathcal{E}_1) + c_1(\mathcal{E}_2)) x^2 + (c_2(\mathcal{E}_1) + c_1(\mathcal{E}_1) c_1(\mathcal{E}_2) + c_2(\mathcal{E}_2)) x \\
&\qquad + c_1(\mathcal{E}_1) c_2(\mathcal{E}_2) + c_2(\mathcal{E}_1) c_1(\mathcal{E}_2).
\end{align*}
\end{lm} 
\begin{proof} 
To find the pushforward of $1$, we note that by functoriality,
the desired class must be given by $ax + b c_1(\mathcal{E}_1) + c c_1(\mathcal{E}_2)$,
for constants $a$ and $b$.
When $X$ is a point, this formula asserts that the Segre surface $\pp^1 \times \pp^1 \subset \pp^3$
is of degree $a$; thus $a = 2$.
Moreover, when we tensor $\mathcal{E}_1$ by a line bundle $\mathcal{L}$
with Chern class $\ell = c_1(\mathcal{L})$,
then the pushforward is unaffected, but $x$ is replaced by
$x - \ell$ while $c_1(\mathcal{E}_1)$ is replaced by $c_1(\mathcal{E}_1) + 2\ell$
and $c_1(\mathcal{E}_2)$ is unchanged.
It follows that
\[2(x - \ell) + b (c_1(\mathcal{E}_1) + 2\ell) + c c_1(\mathcal{E}_2) = 2x + b c_1(\mathcal{E}_1) + c c_1(\mathcal{E}_2),\]
and so $b = 1$ as desired. By symmetry, $c = 1$ too.

To find the pushforward of $x_1$, we note that by functoriality,
the desired class must be given by
\[a x^2 + b c_1(\mathcal{E}_1) x  + c c_1(\mathcal{E}_2) x + d c_2(\mathcal{E}_1) + e c_2(\mathcal{E}_2) + f c_1(\mathcal{E}_1)^2 + g c_1(\mathcal{E}_2)^2 + h c_1(\mathcal{E}_2) c_1(\mathcal{E}_1),\]
for constants $a$, $b$, $c$, $d$, $e$, $f$, $g$, and $h$.
Moreover, by symmetry, the pushforward of $x_2$ must be given by
\[a x^2 + c c_1(\mathcal{E}_1) x  + b c_1(\mathcal{E}_2) x + e c_2(\mathcal{E}_1) + d c_2(\mathcal{E}_2) + g c_1(\mathcal{E}_1)^2 + f c_1(\mathcal{E}_2)^2 + h c_1(\mathcal{E}_2) c_1(\mathcal{E}_1).\]
In particular, the pushforward of $x_1 + x_2$ must be given by
\begin{multline*}
2ax^2 + (b + c)(c_1(\mathcal{E}_1) + c_1(\mathcal{E}_2)) x + (d + e)(c_2(\mathcal{E}_1) + c_2(\mathcal{E}_2)) \\
+ (f + g)(c_1(\mathcal{E}_1)^2 + c_2(\mathcal{E}_2)^2) + 2h c_1(\mathcal{E}_2) c_1(\mathcal{E}_1).
\end{multline*}
But by push-pull and our previous calculation of the pushforward of $1$,
the pushforward of $x_1 + x_2$ must also be
\[x \cdot (2x + c_1(\mathcal{E}_1) + c_1(\mathcal{E}_2)) = 2x^2 + x(c_1(\mathcal{E}_1) + c_1(\mathcal{E}_2)).\]
We conclude that $a = b + c = 1$, and $d + e = f + g = h = 0$.
In other words, the pushforward of $x_1$ must be given by
\[x^2 + b c_1(\mathcal{E}_1) x  + (1 - b) c_1(\mathcal{E}_2) x + d c_2(\mathcal{E}_1) - d c_2(\mathcal{E}_2) + f c_1(\mathcal{E}_1)^2 - f c_1(\mathcal{E}_2)^2\]
for constants $b$, $d$, and $f$.
When $\mathcal{E}_1$ is trivial, the pushforward of $x_1$ is simply the fundamental class
of the diagonal $\pp \mathcal{E}_2 \subset \pp(\mathcal{E}_2 \oplus \mathcal{E}_2)$,
so by Lemma~\ref{lm:sub}
\[x^2 + (1 - b) c_1(\mathcal{E}_2) x - d c_2(\mathcal{E}_2) - f c_1(\mathcal{E}_2)^2 = x^2 + c_1(\mathcal{E}_2) x + c_2(\mathcal{E}_2).\]
Thus $d = -1$ and $b = f = 0$, and so the class of the pushforward of $x_1$ is given by
\[x^2 + c_1(\mathcal{E}_2) x + c_2(\mathcal{E}_2) - c_2(\mathcal{E}_1).\]
And by symmetry, the class of the pushforward of $x_2$ is given by
\[x^2 + c_1(\mathcal{E}_1) x + c_2(\mathcal{E}_1) - c_2(\mathcal{E}_2).\]

Finally, to calculate the pushforward of $x_1 x_2$,
we note that by the projection formula, the pushforward of $(x_1 + x_2) \cdot x_2$ is
\[x \cdot (x^2 + c_1(\mathcal{E}_1)x + c_2(\mathcal{E}_1) - c_2(\mathcal{E}_2)).\]
On the other hand,
$x_2^2 = - c_1(\mathcal{E}_2) x_2 - c_2(\mathcal{E}_2)$, so the pushforward
of $x_2^2$ is
\[- c_1(\mathcal{E}_2) \cdot (x^2 + c_1(\mathcal{E}_1) x + c_2(\mathcal{E}_1) - c_2(\mathcal{E}_2)) - c_2(\mathcal{E}_2) \cdot (2x + c_1(\mathcal{E}_1) + c_1(\mathcal{E}_2)).\]
Subtracting, we obtain that the pushforward of $x_1 x_2$ is
\[x^3 + (c_1(\mathcal{E}_1) + c_1(\mathcal{E}_2)) x^2 + (c_2(\mathcal{E}_1) + c_1(\mathcal{E}_1) c_1(\mathcal{E}_2) + c_2(\mathcal{E}_2)) x + c_1(\mathcal{E}_1) c_2(\mathcal{E}_2) + c_2(\mathcal{E}_1) c_1(\mathcal{E}_2)\]
as desired.
\end{proof}

\section{The Chow ring of $BG$\label{sec:bg}}
Let $G  = (\mathbb{G}_m \times \mathbb{G}_m)
\rtimes \mathbb{Z}/2$ as in Definition~\ref{G}.
Our goal in this section is to compute the Chow ring of $BG$.
These results (as well as the results we shall obtain
in Section~\ref{sec:pushpull}) are related,
at least in some sense, to results on wreath
products with cyclic groups obtained by Totaro (c.f.\ Section~2.8 of~\cite{totaro}).

To start with, we observe that we have a natural embedding
$G \hookrightarrow \gl_2$ as the stabilizer
of an unordered pair of distinct lines in $V$.
In particular, $BG$
is identified with the complement of the image
of the squaring (Veronese) map
\[\pp V_1 / \gl_2 \to \pp V_2 / \gl_2.\]
Write $t = c_1(\oo_{\pp V_2}(1))$.

\begin{lm} The Chow ring of $\pp V_2 / \gl_2$
is given by
\[\chow^* (\pp V_2 / \gl_2) \simeq \zz[\alpha_1, \alpha_2, t] / ((t^2 + 2\alpha_1 t + 4 \alpha_2)(t + \alpha_1)).\]
\end{lm}
\begin{proof}
Write $a_1$ and $a_2$ for the Chern roots of $V$.
From Grothendieck's projective bundle formula, the Chow ring
of $\pp V_2$ is given by
\[\zz[\alpha_1, \alpha_2, t] / p,\]
where
\[p = (t - 2a_1)(t - 2a_2)(t - a_1 - a_2) = (t^2 - 2\alpha_1 t + 4 \alpha_2)(t - \alpha_1). \qedhere\]
\end{proof}
	
From Lemma~\ref{ver2} and the localization exact sequence for Chow
rings, we conclude that $\chow^*(BG)$ is the quotient of $\chow^* (\pp V_2/ \gl_2)$
by the relations $2s_2^1 - 2\alpha_1 = s_2^2 - 2\alpha_2 = 0$.
Using Lemma~\ref{rec}, we calculate
\begin{align*}
s_2^0 &= 1 \\
s_2^1 &= t \\
s_2^2 &= t^2 - \alpha_1 t + 2 \alpha_2.
\end{align*}
Thus
\begin{align*}
2s_2^1 - 2\alpha_1 &= 2t - 2\alpha_1 \\
s_2^2 - 2\alpha_2 &= t^2 - \alpha_1 t.
\end{align*}
Note that these relations imply $(t^2 - 2\alpha_1 t + 4 \alpha_2)(t - \alpha_1) = 0$.
We conclude that
\begin{equation} \label{bgt}
\chow^*(BG) \simeq \zz[\alpha_1, \alpha_2, t] / (2t - 2\alpha_1, t^2 - \alpha_1 t).
\end{equation}

\begin{thm} \label{thm:bg} The Chow ring of $BG$ is given by
\[\chow^*(BG) \simeq \zz[\beta_1, \beta_2, \gamma] / (2\gamma, \gamma^2 + \beta_1\gamma).\]
\end{thm}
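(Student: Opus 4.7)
The plan is to derive the claimed presentation from the presentation $\chow^*(BG) \simeq \zz[\alpha_1, \alpha_2, t]/(2t - 2\alpha_1, t^2 - \alpha_1 t)$ just established in equation~(\ref{bgt}) via an explicit change of variables. The first step is to identify the classes $\beta_1, \beta_2$ with $\alpha_1, \alpha_2$: since the embedding $G \hookrightarrow \gl_2$ realizes $G$ as the stabilizer of an unordered pair of lines, restricting the standard representation $V$ of $\gl_2$ to $G$ yields precisely $L_1 \bpls L_1 = W$, so $\beta_i = c_i(W) = c_i(V|_G) = \alpha_i$ in $\chow^*(BG)$.

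The more delicate step is to express $t$ in terms of $\beta_1$ and $\gamma$. The $G$-fixed point $[\ell_1 \ell_2] \in \pp V_2$ carries a one-dimensional $G$-representation as the fiber of $\mathcal{O}_{\pp V_2}(-1)$: the torus $\mathbb{G}_m \times \mathbb{G}_m$ acts on $\ell_1 \ell_2$ by the character $(z_1, z_2) \mapsto (z_1 z_2)^{-1}$, while the involution acts trivially since it merely permutes the factors $\ell_1, \ell_2$. By contrast, on $\det W = \langle e_1 \wedge e_2 \rangle$ the involution acts by $-1$, by anticommutativity of the wedge. Comparing characters on both the torus and the involution, we obtain $\mathcal{O}_{\pp V_2}(1)|_{BG} \simeq \det W \otimes \Gamma$, so $t = \beta_1 + \gamma$ in $\chow^*(BG)$.

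With this identification in hand, substituting $t \mapsto \beta_1 + \gamma$ and $\alpha_i \mapsto \beta_i$ in equation~(\ref{bgt}) converts the relation $2t - 2\alpha_1$ into $2\gamma$, and the relation $t^2 - \alpha_1 t$ into $(\beta_1 + \gamma)^2 - \beta_1(\beta_1 + \gamma) = \gamma^2 + \beta_1 \gamma$, yielding exactly the stated presentation; the inverse substitution $\gamma \mapsto t - \beta_1$ exhibits this as an isomorphism of graded rings. The main obstacle I anticipate is the character calculation that distinguishes $\mathcal{O}_{\pp V_2}(1)$ from $\det W$ as $G$-representations at the fixed point --- it is precisely the sign from the $\mathbb{Z}/2$ action on the wedge that forces the appearance of $\Gamma$ in the final presentation, and without pinning it down carefully one would misidentify the $2$-torsion structure of the answer.
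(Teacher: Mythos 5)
Your proposal is correct, and it follows the same overall skeleton as the paper's proof: both start from the presentation \eqref{bgt}, identify $\alpha_i = \beta_i$ via the restriction $V|_G = W$, and then conclude by the change of variables $t = \beta_1 + \gamma$. The difference lies entirely in how the key identification $\gamma = t - \alpha_1$ is justified. The paper argues indirectly: since $G \to \zz/2\zz$ splits, $\gamma$ is a \emph{nonzero} $2$-torsion class in $\chow^1(BG)$, and by inspection of \eqref{bgt} the only such class is $t - \alpha_1$. You instead prove the stronger, canonical statement $\oo_{\pp V_2}(1)|_{BG} \simeq \det W \otimes \Gamma$ by computing the $G$-character of the fiber of $\oo_{\pp V_2}(-1)$ at the fixed point $[x_1 x_2]$ (torus acting by $(z_1 z_2)^{-1}$, involution acting trivially since $x_1 x_2$ is symmetric, versus the sign $-1$ on $\det W$ coming from the wedge); your computation is correct, using the standard fact that equivariant bundles on the orbit $\gl_2/G$ correspond to $G$-representations via the fiber at the base point. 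The paper's argument is shorter but leans on the accident that \eqref{bgt} has a unique nonzero $2$-torsion class in degree $1$; yours is more explicit and robust --- it identifies the actual line bundle rather than locating its Chern class by elimination, so it would survive in situations with more torsion, and it isolates precisely the sign that produces $\Gamma$. Both arguments then finish identically, since the substitution $t \mapsto \beta_1 + \gamma$, $\alpha_i \mapsto \beta_i$ is a ring isomorphism carrying the ideal $(2t - 2\alpha_1,\, t^2 - \alpha_1 t)$ onto $(2\gamma,\, \gamma^2 + \beta_1 \gamma)$.
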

\begin{proof}
We have already done most of the work; it remains just to note that
the representation $V$ of $\gl_2$ restricts to the representation $W$ of $G$,
so $\alpha_i = \beta_i$, and to
identify $\gamma$
and rewrite the presentation \eqref{bgt} in terms of $\gamma$.

Because $\gamma$ is the pullback to $\chow^1(BG)$
of a nontrivial $2$-torsion element in $\chow^1(B(\zz/2\zz))$,
and $G$ splits as a semidirect product, $\gamma$ is a nontrivial $2$-torsion element
in $\chow^1(BG)$.
From \eqref{bgt}, the only such element is $t - \alpha_1$. Substituting $t = \gamma + \alpha_1$
into \eqref{bgt} yields the desired result.
\end{proof}

\section{\label{genrel} Generators and some relations}

In this section, we show that $\chow^*(\mm)$ is generated by $\lambda_1$, $\lambda_2$, and $\delta_1$,
as per Theorem~\ref{thm:main}.
We then establish several relations that these generators satisfy;
in characteristic zero, this includes all relations claimed in Theorem~\ref{thm:main}.

\begin{lm} \label{delta1delta1}
The restriction $\delta_1|_{\Delta_1} = \gamma - \lambda_1$.
\end{lm}
\begin{proof}
The normal bundle of $\Delta_1$ in $\bar{M}_2$
is given by the line bundle whose fiber over $E_1 \cup_p E_2$
is canonically identified with $T_p E_1 \otimes T_p E_2$.
By inspection this differs from
$\wedge^2 H^0(\omega_{E_1 \cup_p E_2})^\vee$ by the class $\gamma$,
which gives the desired formula.
\end{proof}

From Theorem~\ref{thm:bg} and the presentation given in Section~\ref{sec:s},
we see that the Chow ring of $\Delta_1$ is generated by $\lambda_1$, $\lambda_2$, and $\gamma$;
similarly, from the presentation given in Section~\ref{sec:u},
we see that the Chow ring of $\mm \smallsetminus \Delta_1$ is generated by $\lambda_1$ and $\lambda_2$.
Applying the localization sequence
\[\chow^{* - 1}(\Delta_1) \to \chow^*(\mm) \to\chow^*(\mm \smallsetminus \Delta_1) \to 0,\]
together with Lemma~\ref{delta1delta1}, we see that $\chow^*(\mm)$
is generated by $\lambda_1$, $\lambda_2$, and $\delta_1$ as promised.

\subsection{\boldmath Relations from $\chow^*(BG)$}
Since the relations $2\gamma = \gamma^2 + \lambda_1 \gamma = 0$ hold in $\chow^*(BG)$,
they must also hold in $\chow^*(\Delta_1)$. Using Lemma~\ref{delta1delta1},
we may write these relations as
\[2\delta_1 + 2\lambda_1 = \delta_1^2 + \delta_1\lambda_1 = 0 \in \chow^*(\Delta_1).\]
Pushing these relations forward to $\mm$, we obtain relations in $\chow^*(\mm)$:
\[2\delta_1^2 + 2\delta_1\lambda_1 = \delta_1^3 + \delta_1^2\lambda_1 = 0 \in \chow^*(\mm).\]

\subsection{Relations from Grothendieck--Riemann--Roch\label{relgrr}}
Here we recall Mumford's proof of several
relations in $\chow^*(\mm)$ in \cite{enum}, taking care
to work with integral coefficients rather than rational coefficients
where possible.

Let $\pi \colon \mathcal{C} \to \mm$ be the universal curve.
Write $\omega_\pi$ for the relative dualizing sheaf, and $\Omega_\pi^1$
for the sheaf of relative differentials; these are sheaves on $\mathcal{C}$.
Applying the Grothendieck--Riemann--Roch theorem to
calculate the pushforward of the structure sheaf of $\mathcal{C}$
under $\pi$, we obtain:

\begin{lm} \label{lm:grr}
The following relations hold modulo torsion:
\begin{align*}
2 &= \pi_*(c_1(\Omega^1_\pi)) \\
12 \lambda_1 &= \pi_*(c_1(\Omega^1_\pi)^2 + c_2(\Omega^1_\pi)) \\
12 \lambda_1^2 - 24\lambda_2 &= \pi_*(c_1(\Omega^1_\pi)c_2(\Omega^1_\pi)).
\end{align*}
\end{lm}
\begin{rem} \label{rem:pappas}
Pappas has shown that, for maps of nonnegative relative dimension in characteristic zero,
the relations given by the Grothendieck--Riemann--Roch theorem
hold integrally when denominators are cleared \cite{pappas}.
Consequently these relations hold exactly in characteristic zero.
\end{rem}

Write $\mathcal{S} = \mathcal{S}_0 \cup \mathcal{S}_1 \subset \mathcal{C}$
for the universal singular locus, where $\mathcal{S}_0$ denotes the universal
self node and $\mathcal{S}_1$ denotes the universal disconnecting node.
Since $\Omega^1_\pi \simeq \omega_\pi \otimes \mathcal{I}_{\mathcal{S}}$,
we have
\[c_1(\Omega^1_\pi) = c_1(\omega_\pi) \tand c_2(\Omega^1_\pi) = [\mathcal{S}].\]
The relations of Lemma~\ref{lm:grr} may thus be written as
\begin{align*}
2 &= \pi_*(c_1(\omega_\pi)) \\
12 \lambda_1 &= \pi_*(c_1(\omega_\pi)^2 + [\mathcal{S}]) \\
12 \lambda_1^2 - 24\lambda_2 &= \pi_*(c_1(\omega_\pi) \cdot [\mathcal{S}]).
\end{align*}

\begin{lm} \label{kappa} We have the following relation in $\chow^*(\mathcal{C})$:
\[c_1(\omega_\pi)^2 - c_1(\omega_\pi) \lambda_1 + \lambda_2 - [\mathcal{S}_1] = 0.\]
\end{lm}
\begin{proof}
As shown in Section~\ref{sec:u}, the natural map $\pi^* \pi_* \omega_\pi \to \omega_\pi$
vanishes exactly along $\mathcal{S}_1$.
Let $\mathcal{L}$ denote its kernel, so we have an exact sequence:
\[0 \to \mathcal{L} \to \pi^* \pi_* \omega_\pi \to \omega_\pi \otimes \mathcal{I}_{\mathcal{S}_1} \to 0.\]
Since $\mathcal{S}_1$ is lci of codimension $2$,
the projective dimension of $\omega_\pi \otimes \mathcal{I}_{\mathcal{S}_1}$ is $1$;
consequently $\mathcal{L}$ is a line bundle, and so
\begin{align*}
0 = c_2(\mathcal{L}) &= \left[\frac{1 + c_1(\pi^* \pi_* \omega_\pi) + c_2(\pi^* \pi_* \omega_\pi)}{1 + c_1(\omega_\pi \otimes \mathcal{I}_{\mathcal{S}_1}) + c_2(\omega_\pi \otimes \mathcal{I}_{\mathcal{S}_1})}\right]_2 = \left[\frac{1 + \lambda_1 + \lambda_2}{1 + c_1(\omega_\pi) + [\mathcal{S}_1]}\right]_2 \\
&= c_1(\omega_\pi)^2 - c_1(\omega_\pi) \lambda_1 + \lambda_2 - [\mathcal{S}_1]. \qedhere
\end{align*}
\end{proof}

\begin{lm} \label{lm2448} We have the following relation in $\chow^*(\mm)$ modulo torsion:
\[24 \lambda_1^2 - 48 \lambda_2 = 0.\]
\end{lm}
\begin{rem} In characteristic zero, the arguments of this section establish this relation exactly
(not just modulo torsion);
c.f.\ Remark~\ref{rem:pappas}.
\end{rem}
\begin{proof} In light of the relation $12 \lambda_1^2 - 24\lambda_2 = \pi_*(c_1(\omega_\pi) \cdot [\mathcal{S}])$,
it remains to show $c_1(\omega_\pi) \cdot [\mathcal{S}]$
is $2$-torsion.
But this is clear since the residue map gives a trivialization
of $\omega_\pi|_\mathcal{S}$ up to sign.
\end{proof}

\begin{lm} \label{delta0} We have the following relation in $\chow^*(\mm)$ modulo torsion:
\[\delta_0 = 10 \lambda_1 - 2 \delta_1.\]
\end{lm}
\begin{rem} In characteristic zero, the arguments of this section establish this relation exactly
(not just modulo torsion);
c.f.\ Remark~\ref{rem:pappas}.
Combined with Lemma~\ref{delta0lambda2} below, we obtain the relation
$20 \lambda_1 \lambda_2 - 4 \delta_1 \lambda_2 = 0$.
\end{rem}
\begin{proof}
Combining the relations
$2 = \pi_*(c_1(\omega_\pi))$ and 
$12 \lambda_1 = \pi_*(c_1(\omega_\pi)^2 + [\mathcal{S}])$ with Lemma~\ref{kappa}, we obtain
\begin{align*}
12 \lambda_1 &= \pi_*(c_1(\omega_\pi)^2 + [\mathcal{S}]) \\
&= \pi_*(c_1(\omega_\pi) \lambda_1 - \lambda_2 + [\mathcal{S}_1] + [\mathcal{S}_0] + [\mathcal{S}_1]) \\
&= \pi_*(c_1(\omega_\pi)) \lambda_1 + 2 \pi_*([\mathcal{S}_1]) + \pi_*(\mathcal{S}_0) \\
&= 2 \lambda_1 + 2 \delta_1 + \delta_0.
\end{align*}
This yields the desired relation upon rearrangement.
\end{proof}

\begin{lm} \label{delta0lambda2}
We have the following relation in $\chow^*(\mm)$:
\[2\delta_0 \lambda_2 = 0.\]
\end{lm}
\begin{proof}
The map $\bar{M}_{1,2} \to \mm$
obtained by gluing together
the marked points is $2$-to-$1$ onto $\Delta_0$,
so the composition of pullback followed by pushforward
$\chow^*(\bar{M}_2) \to \chow^*(\bar{M}_{1,2}) \to \chow^*(\bar{M}_2)$
is multiplication by $2\delta_0$.

It thus suffices to show
that the pullback of $\lambda_2$ to $\bar{M}_{1,2}$ is zero.
But this is clear since the residue map gives a surjection
from the pullback of the Hodge bundle to the structure sheaf of
$\bar{M}_{1,2}$.
\end{proof}
 
\section{\label{sec:pushpull} Pushforward and pullback along $B(\mathbb{G}_m \times \mathbb{G}_m) \to BG$}

In this section, we calculate the pushforward and pullback maps
along
\[\pi \colon B(\mathbb{G}_m \times \mathbb{G}_m) \to BG.\]
Write $t_1$ and $t_2$ in $\chow^*(\mathbb{G}_m \times \mathbb{G}_m)$
for the Chern classes of the standard representations
of each $\mathbb{G}_m$ factor. Since pullback is induced by
restrictions of representations, we immediately find
\begin{align*}
\pi^*(\beta_1) &= t_1 + t_2 \\
\pi^*(\beta_2) &= t_1 t_2 \\
\pi^*(\gamma) &= 0.
\end{align*}

The question of pushforward is more subtle, and we will deduce
it from a special case of an integral Grothendieck--Riemann--Roch theorem.

Let $\pi \colon X \to Y$ be finite flat map ramified along a divisor $R$,
and $\mathcal{F}$ be an element of the Grothendieck group of sheaves on $X$.
Then a special case of the ordinary Grothendieck--Riemann--Roch
theorem with rational coefficients states that
\[c_1(\pi_* \mathcal{F}) = \pi_*\left(c_1(\mathcal{F}) - \frac{\rk \mathcal{F}}{2} \cdot R\right) \mod \text{torsion}.\]

For maps of nonnegative relative dimension in characteristic zero,
Pappas has shown in \cite{pappas} that the
Grothendieck--Riemann--Roch theorem holds integrally when we clear denominators;
equivalently, in this special case,
the above identity holds integrally when $\mathcal{F}$
is a multiple of~$2$ in the Grothendieck group of sheaves (which forces $\rk \mathcal{F}$ to be even).

Here we note that, in this special case, this identity holds integrally in arbitrary characteristic,
subject only to the condition that $\rk \mathcal{F}$ is even:

\begin{lm} \label{grr1} If $\rk \mathcal{F}$ is even, then
\[c_1(\pi_* \mathcal{F}) = \pi_*\left(c_1(\mathcal{F}) - \frac{\rk \mathcal{F}}{2} \cdot R\right).\]
\end{lm}
\begin{proof}
Resolving our sheaves, we reduce to the case when
$\mathcal{F}$ is a linear combination of vector bundles.
Applying the splitting principle, we further reduce to the case when $\mathcal{F}$
is a linear combination of line bundles.
Pulling back to the total space of the dual of these line bundles, we further reduce to the case
when the duals of these line bundles are effective.
Finally, using the relation
$[\oo(-D)] = [\oo] - [\oo_D]$,
we reduce to the case when $\mathcal{F}$ is a linear combination of a trivial bundle $\mathcal{O} \oplus \mathcal{O}$
of rank $2$ and structure sheaves of divisors $\oo_D$.
It thus remains to observe that the desired result holds in
these two cases, i.e.\ that
\[2 c_1(\pi_* \oo) = -\pi_*(R) \tand c_1(\pi_* \oo_D) = \pi_*(c_1(\oo_D)).\]

The first of these equalities can be seen by considering the ``map''
$\pi_* \oo \to \oo^{\deg \pi}$ given by evaluation at the points of the fiber;
this ``map'' is defined up to the action of the symmetric group on the fiber,
so the square of its determinant is well-defined and gives a map
$(\wedge^{\deg \pi} \pi_* \oo)^{\otimes 2} \to \oo$, which by inspection
vanishes exactly along $\pi_* R$.

The second of these equalities can be seen by
observing that both sides equal $\pi_* D$.
\end{proof}

\begin{cor} \label{pushc1}
If $\mathcal{L}$ is a line bundle on $X$, then
\[\pi_* (c_1(\mathcal{L})) = c_1(\pi_* \mathcal{L}) - c_1(\pi_* \oo).\]
\end{cor}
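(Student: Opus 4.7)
The plan is to apply Lemma~\ref{grr1} to the virtual sheaf $\mathcal{F} = [\mathcal{L}] - [\oo]$ in the Grothendieck group of sheaves on $X$. The key observation is that this virtual sheaf has rank $0$, which is even, so Lemma~\ref{grr1} applies; crucially, the ramification term $\frac{\rk \mathcal{F}}{2} \cdot R$ drops out entirely because $\rk \mathcal{F} = 0$.

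Next I would carry out the computation. On the right-hand side of Lemma~\ref{grr1}, we have $c_1(\mathcal{F}) = c_1(\mathcal{L}) - c_1(\oo) = c_1(\mathcal{L})$, so the formula collapses to $\pi_*(c_1(\mathcal{L}))$. On the left-hand side, $c_1$ is additive on the Grothendieck group (via the determinant), giving $c_1(\pi_* \mathcal{F}) = c_1(\pi_* \mathcal{L}) - c_1(\pi_* \oo)$. Equating the two sides produces exactly the claimed identity.

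The only thing to verify is that Lemma~\ref{grr1} truly applies at the level of virtual sheaves rather than honest sheaves of even rank. This is already essentially built into the proof of Lemma~\ref{grr1}, which reduces to a linear combination of line bundles and then to structure sheaves of divisors; the formula is linear in $\mathcal{F}$, so extends to formal $\zz$-linear combinations whose total rank is even. There is no substantive obstacle: the corollary is really just the observation that passing from $\mathcal{L}$ to $[\mathcal{L}] - [\oo]$ is the cleanest way to trade the ``even rank'' hypothesis of the lemma against the ramification divisor, producing a clean first-Chern-class pushforward formula valid without any rank parity assumption on $\mathcal{L}$.
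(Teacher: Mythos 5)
Your proof is correct and is exactly the paper's argument: the paper's proof reads, in its entirety, that the corollary follows from Lemma~\ref{grr1} applied to $[\mathcal{L}] - [\mathcal{O}]$, which is of even rank. Note that your final worry is already moot, since Lemma~\ref{grr1} is stated for $\mathcal{F}$ an arbitrary element of the Grothendieck group of sheaves, so no extension to virtual sheaves is needed.
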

\begin{proof}
This follows from Lemma~\ref{grr1} applied to $[\mathcal{L}] - [\mathcal{O}]$ (which is of even rank).
\end{proof}

\begin{lm} \label{pushbg} The pushforward map 
along $\pi \colon B(\mathbb{G}_m \times \mathbb{G}_m) \to BG$
may be described recursively as follows:
\begin{align*}
\pi_*(1) &= 2 \\
\pi_*(t_1) &= \beta_1 + \gamma \\
\pi_*(t_1^a) &= \beta_1 \pi_*(t_1^{a - 1}) - \beta_2 \pi_*(t_1^{a - 2}) \quad \text{for $a \geq 2$}\\
\pi_*(t_1^a t_2^b) &= \beta_2^{\min(a, b)} \pi_*(t_1^{|a - b|}).
\end{align*}
\end{lm}
\begin{proof}
Since $\pi$ is of degree $2$, we have $\pi_*(1) = 2$.

From Corollary~\ref{pushc1}, we see that the pushforward of $t_1$
is given by the difference between the induced representations
from $\mathbb{G}_m \times \mathbb{G}_m$ to $BG$
of the standard representation of the first $\mathbb{G}_m$-factor
and the trivial representation. These representations are
the standard representation of $G$ via its inclusion in $\gl_2$
as the normalizer of a maximal torus, respectively the sign representation
of the $\zz/2\zz$ quotient of $G$. Since these have Chern classes $\beta_1$
and $\gamma$ respectively (and $2\gamma = 0$), we obtain
$\pi_*(t_1) = \beta_1 - \gamma = \beta_1 + \gamma$ as desired.

\smallskip

\noindent
Using push-pull, we have for $a \geq 2$:
\[\pi_*(t_1^a) = \pi_*((t_1 + t_2) t_1^{a - 1} - t_1t_2 t_1^{a - 2}) = \beta_1 \pi_*(t_1^{a - 1}) - \beta_2 \pi_*(t_1^{a - 2}).\]
And finally, using push-pull and symmetry, we have
\[\pi_*(t_1^a t_2^b) = \pi_*((t_1 t_2)^{\min(a, b)} t_1^{|a - b|}) = \beta_2^{\min(a, b)} \pi_*(t_1^{|a - b|}). \qedhere\]
\end{proof}

We conclude this section by calculating the Chern classes of the representations
$V_n \bpls V_n$, which are obtained by pushing forward the representation $V_n$
of $\mathbb{G}_m \times \mathbb{G}_m$ with weight $(n, 0)$ to $BG$:

\begin{lm} \label{npower}
The Chern classes of $W_n$ are given by:
\begin{align*}
c_1(W_n) &= n \beta_1 + (n + 1) \gamma \\
c_2(W_n) &= n^2 \beta_2.
\end{align*}
\end{lm}
\begin{proof}
We argue by induction on $n$. When $n = 0$, the representation $W_0$
is the regular representation of the $\zz/2\zz$ quotient of $G$,
which splits as $\mathbf{1} \oplus \Gamma$;
its Chern classes are thus $\gamma$ and $0$ respectively.
When $n = 1$, the representation $W_1 = W$ has Chern classes $\beta_1$
and $\beta_2$ by definition.

For the inductive hypothesis, we suppose $n \geq 2$. Observe that we have
a direct sum decomposition
\[W_{n-1} \otimes W_1 \simeq W_n \oplus (W_{n - 2} \otimes \wedge^2 W_1 \otimes \Gamma).\]
Writing $a_n$ and $b_n$ for the Chern roots of $W_n$, we obtain
\begin{multline} \label{blah}
(1 + a_{n-1} + a_1)(1 + a_{n-1} + b_1)(1 + b_{n-1} + a_1)(1 + b_{n-1} + b_1) \\
= (1 + a_n)(1 + b_n)(1 + a_{n-2} + \beta_1 + \gamma)(1 + b_{n - 2} + \beta_1 + \gamma).
\end{multline}
Comparing terms of degree $1$ on both sides of \eqref{blah}, and using the relation $2\gamma = 0$, we have
\[2 (a_{n - 1} + b_{n - 1}) + 2(a_1 + b_1) = a_n + b_n + a_{n - 2} + b_{n - 2} + 2\beta_1,\]
and so by our inductive hypothesis
\begin{align*}
c_1(W_n) = a_n + b_n &= 2 (a_{n - 1} + b_{n - 1}) + 2(a_1 + b_1) - (a_{n - 2} + b_{n - 2}) - 2\beta_1 \\
&= 2 [(n - 1) \beta_1 + n \gamma] + 2\beta_1 - [(n - 2) \beta_1 + (n - 1) \gamma] - 2 \beta_1 \\
&= n\beta_1 + (n + 1) \gamma.
\end{align*}
Similarly, comparing terms of degree $2$ on both sides of \eqref{blah}, and using $2\gamma = 0$, we have
\begin{multline*}
(a_{n-1} + b_{n-1})^2 + 2a_{n-1}b_{n-1} + (a_1 + b_1)^2 + 2a_1b_1 + 3(a_{n-1} + b_{n-1})(a_1 + b_1) \\
= a_n b_n + a_{n-2} b_{n-2} + (\beta_1 + \gamma)(a_{n - 2} + b_{n - 2}) + (\beta_1 + \gamma)^2 + (a_n + b_n)(a_{n - 2} + b_{n - 2} + 2\beta_1),
\end{multline*}
and so by our inductive hypothesis
\begin{align*}
c_2(W_n) &= a_n b_n \\
&= (a_{n-1} + b_{n-1})^2 + 2a_{n-1}b_{n-1} + (a_1 + b_1)^2 + 2a_1b_1 \\
&\qquad + 3(a_{n-1} + b_{n-1})(a_1 + b_1) - a_{n-2} b_{n-2} - (\beta_1 + \gamma)(a_{n - 2} + b_{n - 2}) \\
&\qquad - (\beta_1 + \gamma)^2 - (a_n + b_n)(a_{n - 2} + b_{n - 2} + 2\beta_1) \\
&= [(n - 1) \beta_1 + n \gamma]^2 + 2 (n - 1)^2 \beta_2 + \beta_1^2 + 2 \beta_2 \\
&\qquad + 3 [(n - 1) \beta_1 + n \gamma] \beta_1 - (n - 2)^2 \beta_2 - (\beta_1 + \gamma) [(n - 2) \beta_1 + (n - 1) \gamma] \\
&\qquad - (\beta_1 + \gamma)^2 - [n \beta_1 + (n + 1) \gamma]([(n - 2) \beta_1 + (n - 1) \gamma] + 2\beta_1) \\
&= n^2 \beta_2 - (n - 1) (\gamma^2 + \beta_1 \gamma) \\
&= n^2 \beta_2. \qedhere
\end{align*}
\end{proof}

\section{The Chow ring of $\Delta_1$\label{sec:adelta1}}

In this section, we calculate the Chow ring $\chow^*(\Delta_1)$.
Recall the presentation of $\Delta_1$ obtained in Section~\ref{sec:s}:
\[\Delta_1 \simeq \left(W_{4,6} \smallsetminus (L_{4,6} \times 0 \cup 0 \times L_{4,6})\right)/G.\]
In this section, we will use the calculations of $\chow^*(BG) \simeq \chow^*(W_{4,6}/G)$
from Section~\ref{sec:bg} to compute the Chow ring of $\Delta_1$ using the localization sequence.

We first note that the formulas in Section~\ref{sec:mult} apply to calculate
loci in the total space of vector bundles with the origin excised: Simply pull back
the corresponding classes from the projectivization.
So to start with, we excise the origin of $W_{4,6}$ using the localization sequence
\[\chow^{* - 4}(BG) \to \chow^*(W_{4,6}/G) \to \chow^*(W_{4,6} \smallsetminus (0 \times 0)) \to 0.\]
In terms of the isomorphism $\chow^*(BG) \simeq \chow^*(W_{4,6}/G)$,
this first map is multiplication by the Euler class $e(W_{4,6}) = c_4(W_{4,6})$, and so
\[\chow^*(W_{4,6} \smallsetminus (0 \times 0)) \simeq \chow^*(BG) / c_4(W_{4,6}).\]
To compute $c_4(W_{4,6})$, we use Lemma~\ref{npower}, which gives
\begin{equation} e(W_{4,6}) = c_4(W_{4,6}) = 
c_2(W_4) \cdot c_2(W_6) = 
 16 \lambda_2 \cdot 36 \lambda_2 = 576 \lambda_2^2.  \end{equation}
To get from $W_{4,6} \smallsetminus (0 \times 0)$ to $\Delta_1$,
we have to excise 
the closed substack
\[Z = \Big(\big((L_{4,6} \smallsetminus 0) \times 0\big) \cup \big(0 \times (L_{4,6} \smallsetminus 0)\big)\Big) / G \simeq \big((L_{4,6} \smallsetminus 0) \times 0\big) / (\mathbb{G}_m \times \mathbb{G}_m).\]
This isomorphism implies that
$\chow^*(Z)$ is generated by $\left\{1, t_1\right\}$ as a module over
$\chow^*(BG)$,
so it suffices to determine the images of $1$ and $t_1$ under
the localization sequence
\[\chow^{* - 2}(Z) \to \chow^*(W_{4,6} \smallsetminus (0 \times 0)) \to \chow^*(\Delta_1) \to 0.\]
But from Lemma~\ref{lm:sub}, the class of
\[\big((L_{4,6} \smallsetminus 0) \times 0\big)/(\mathbb{G}_m \times \mathbb{G}_m) \subset \big(L_{4,6} \oplus L_{4,6} \smallsetminus (0 \times 0)\big) / (\mathbb{G}_m \times \mathbb{G}_m)\]
is given by $c_2(0 \times L_{4,6}) = 24 t_2^2$.
Consequently, the images of $1$ and $t_1$ are the pushforwards of
$24 t_2^2$ and $24 t_1 t_2^2$ along $B(\mathbb{G}_m \times \mathbb{G}_m) \to BG$;
by Lemma~\ref{pushbg}, these are $24 \lambda_1^2 - 48 \lambda_2$ and $24 \lambda_1 \lambda_2$
respectively.
These imply our earlier relation $576 \lambda_2^2 = 0$, and so it follows that:

\begin{thm} \label{adelta1} The Chow ring of $\Delta_1$ is given by
\begin{equation} 
\chow^*(\Delta_1) = \zz[\lambda_1, \lambda_2, \gamma]/(2\gamma, \gamma^2 +
\lambda_1 \gamma, 24 \lambda_1^2 - 48 \lambda_2, 24 \lambda_1 \lambda_2).
\end{equation} 
\end{thm}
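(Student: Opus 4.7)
The plan is to follow the two-step excision strategy outlined in the preceding paragraphs: starting from the identification $\chow^*(BG) \simeq \chow^*(W_{4,6}/G)$, I would first remove the origin of $W_{4,6}$, then remove the ``coordinate axes'' locus $Z$, using the localization sequence at each step. Since both boundary inclusions are regular embeddings and the ingoing Chow groups are explicitly understood, the only substantive computation is to identify the image of the resulting pushforward maps in $\chow^*(BG)$.

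For the first step, the localization sequence
\[\chow^{*-4}(BG) \xrightarrow{\cdot e(W_{4,6})} \chow^*(BG) \to \chow^*(W_{4,6} \smallsetminus 0) \to 0\]
reduces everything to computing the Euler class of $W_{4,6}$. By Lemma~\ref{npower} we have $c_2(W_4) = 16\lambda_2$ and $c_2(W_6) = 36\lambda_2$, and since the top Chern classes of $W_4$ and $W_6$ are $c_2(W_4)$ and $c_2(W_6)$, multiplicativity gives $e(W_{4,6}) = c_2(W_4)\, c_2(W_6) = 576\lambda_2^2$.

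For the second step, I would identify $Z \simeq ((L_{4,6}\smallsetminus 0)\times 0)/(\mathbb{G}_m\times\mathbb{G}_m)$ as in the text (the $\zz/2\zz$ in $G$ swaps the two axes, so the quotient of the union by $G$ equals the quotient of one axis by the stabilizer $\mathbb{G}_m \times \mathbb{G}_m$). A short check shows that $\chow^*(Z)$ is generated as a $\chow^*(BG)$-module by $1$ and $t_1$, because $\chow^*(B(\mathbb{G}_m\times\mathbb{G}_m))=\zz[t_1,t_2]$ is free over its invariant subring $\zz[t_1+t_2,\,t_1t_2]$ on $\{1,t_1\}$, and $\pi^*\beta_1=t_1+t_2$, $\pi^*\beta_2=t_1t_2$. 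The pushforward from $Z$ into $\chow^*(BG)$ then factors as multiplication by the class of the closed embedding $Z\hookrightarrow W_{4,6}/G$ followed by the transfer $\pi_*$ of Lemma~\ref{pushbg}. By Lemma~\ref{lm:sub}, the class of $(L_{4,6}\smallsetminus 0)\times 0$ inside the total space is the top Chern class of the quotient bundle $0\times L_{4,6}$, which equals $c_2(L_{4,6}) = 4t_2\cdot 6t_2 = 24 t_2^2$. A direct application of Lemma~\ref{pushbg}, together with $2\gamma=0$, yields
\[\pi_*(24t_2^2) = 24\lambda_1^2 - 48\lambda_2, \qquad \pi_*(24 t_1 t_2^2) = 24\lambda_1\lambda_2,\]
giving the two new relations on top of those already present in $\chow^*(BG)$.

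The final bookkeeping step is to verify that the Euler-class relation $576\lambda_2^2=0$ is redundant: multiplying $24\lambda_1\lambda_2=0$ by $\lambda_1$ and then using $24\lambda_1^2=48\lambda_2$ produces $48\lambda_2^2=0$, which already implies $576\lambda_2^2=0$. Thus the only relations left after the two excisions are $2\gamma$, $\gamma^2+\lambda_1\gamma$ (inherited from $\chow^*(BG)$), and $24\lambda_1^2-48\lambda_2$, $24\lambda_1\lambda_2$, as claimed. The main obstacle in this argument is really purely notational: correctly handling the isomorphism $Z\simeq B\mathbb{G}_m\times (L_{4,6}\smallsetminus 0)/\mathbb{G}_m$ and matching the resulting pushforwards along $Z\to BG$ with the pushforward formulas of Lemma~\ref{pushbg}; once those bookkeeping identifications are made, every term is controlled by results already established.
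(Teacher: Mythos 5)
Your proposal is correct and follows essentially the same route as the paper: the same two-step excision (origin, then coordinate axes), the same Euler class $576\lambda_2^2$, the same identification of $Z$ with $((L_{4,6}\smallsetminus 0)\times 0)/(\mathbb{G}_m\times\mathbb{G}_m)$ generated by $\{1,t_1\}$, the same use of Lemma~\ref{lm:sub} to get $24t_2^2$, and the same pushforwards $24\lambda_1^2-48\lambda_2$ and $24\lambda_1\lambda_2$ via Lemma~\ref{pushbg}. Your explicit verification that the Euler-class relation is redundant, and your justification of the module generators via freeness of $\zz[t_1,t_2]$ over the symmetric subring, only make explicit what the paper leaves implicit.
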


\begin{rem}
One observes from this that after inverting 2, the Chow ring is precisely
$\sym^2 \chow^*(\mone)$.
\end{rem}

\section{\label{sec:au} The Chow Ring and Higher Chow Groups of $\mm \smallsetminus \Delta_1$}

In this section we compute the Chow ring and higher Chow groups with $\ell$-adic coefficients
(where $\ell$ is prime to the characteristic of the base field $k$),
of $\mm \smallsetminus \Delta_1$
and of its quotient by the \emph{twisting action} of $\mathbb{G}_m$ --- i.e.\ the action
of $\mathbb{G}_m$ on $\mm \smallsetminus \Delta_1$ obtained by scaling with weight $1$
the coefficients $a$, $b$, $c$, $d$, $e$, $f$, and $g$ in \eqref{uuniv}.

Bloch's higher Chow groups \cite{bloch} are defined as the homology of certain complexes
$z^*(X, \bullet)$
(here we have one complex for each value of $*$, and $\bullet$ denotes the grading of the complexes).
These groups are
significantly easier to compute 
with torsion coefficients (in which case they can often be compared to \'etale cohomology), 
and when the base field is algebraically closed. So here
we adopt the convention that by ``higher Chow groups with $\ell$-adic coefficients''
we mean the groups
\[\chow^*(X, n; \zz_\ell) \colonequals H_n\left(\varprojlim_m z^*(X_{\bar{k}}, \bullet) \otimes^L \zz/\ell^m \zz \right),\]
where $X_{\bar{k}}$ denotes the base change of $X$ to the algebraic closure $\bar{k}$ of our base field $k$.
Note that, when the $\chow^*(X_{\bar{k}}, n; \zz/\ell^m \zz)$ are finitely generated, we have
\[\chow^*(X, n; \zz_\ell) \simeq \varprojlim_m \chow^*(X_{\bar{k}}, n; \zz/\ell^m \zz).\]
In particular, since we have shown $\chow^*(\Delta_1)$ and $\chow^*(\mm)$ are finitely generated,
we still have
\[\chow^*(X, 0; \zz_\ell) = \chow^*(X_{\bar{k}}) \otimes \zz_\ell \tfor X \in \{\Delta_1, \mm, \mm \smallsetminus \Delta_1\}.\]
Moreover, Bloch's higher Chow groups satisfy $\chow^*(\spec \bar{k}, 1; \zz/\ell^m\zz) = 0$ by Theorem~10.3 of~\cite{pt}
(in combination with the isomorphism given in Lecture~19 of~\cite{pt} between higher Chow groups and motivic cohomology).
Consequently, this definition of
``higher Chow groups with $\ell$-adic coefficients''
satisfies $\chow^*(\spec k, 1; \zz_\ell) = 0$.

The description of $\mm \smallsetminus \Delta_1$ given
in Section~\ref{sec:u} shows that
\begin{align*}
(\mm \smallsetminus \Delta_1) / \mathbb{G}_m &\simeq (\pp V_6(2) \smallsetminus \{\text{forms with triple roots}\}) / \gl_2 \\
&\simeq (\pp V_6 \smallsetminus \{\text{forms with triple roots}\}) / \gl_2.
\end{align*}
\begin{defi}
Write $t = c_1(\oo_{\pp V_6}(1))$.
\end{defi}

So by construction,
$\mm \smallsetminus \Delta_1$ is a $\mathbb{G}_m$-bundle
over $(\mm \smallsetminus \Delta_1) / \mathbb{G}_m$
given by the complement of the zero section of a line bundle
with Chern class $t - 2\lambda_1$.

\begin{lm} \label{groth}
The stack $\pp V_6 / \gl_2$ has Chow ring
$\chow^*(\pp V_6 / \gl_2) = \zz[\lambda_1, \lambda_2, t] / p(\lambda_1, \lambda_2, t)$, where
\[p(\lambda_1, \lambda_2, t) = (t^2 - 6\lambda_1 t + 36\lambda_2) (t^2 - 6\lambda_1 t + 5\lambda_1^2 + 16 \lambda_2) (t^2 - 6\lambda_1 t + 8\lambda_1^2 + 4\lambda_2) (t - 3\lambda_1).\]
Moreover, its first higher Chow groups with $\ell$-adic coefficients vanish:
\[\chow^*(\pp V_6 / \gl_2, 1; \zz_\ell) = 0.\]
\end{lm}
\begin{proof}
Writing $\alpha$ and $\beta$ for the Chern roots of $V^*$
(with $\alpha + \beta = -\lambda_1$ and $\alpha \beta = \lambda_2$),
Grothendieck's formula
for the Chow ring of a projective bundle gives
\[\chow^* (\pp V_6/\gl_2) = \chow^* (B\! \gl_2) [t] / p(t) = \zz[\lambda_1, \lambda_2][t] / p(t),\]
where
\begin{align*}
p(t) &= (t + 6\alpha)(t + 6\beta) (t + 5\alpha + \beta)(t + \alpha + 5\beta) (t + 4\alpha + 2\beta)(t + 2\alpha + 4\beta) (t + 3\alpha + 3\beta) \\
&= (t^2 - 6\lambda_1 t + 36\lambda_2) (t^2 - 6\lambda_1 t + 5\lambda_1^2 + 16 \lambda_2) (t - 6\lambda_1 + 8\lambda_1^2 + 4\lambda_2) (t - 3\lambda_1).
\end{align*}

Moreover the vanishing of first higher Chow groups is preserved under taking
vector bundles, and thus (using the localization sequence)
Grassmannian bundles.
Since we have $\chow^*(\spec k, 1; \zz_\ell) = 0$,
we conclude that the first higher Chow groups with $\ell$-adic coefficients
vanish for $B\! \gl_2$, and thus for
any projective bundle over
$B\! \gl_2$, including $\pp V_6/\gl_2$.
\end{proof}

\begin{defi} Write $T_1 \subset \pp V_6/\gl_2$ for the locus with
exactly one triple root, and write $T_2 \subset \pp V_6/\gl_2$ for the locus
of perfect cubes.
\end{defi}

\begin{defi}
Write $\text{cub}_1 \colon (\pp V_1/\gl_2) \to (\pp V_3/\gl_2)$ and $\text{cub}_2 \colon (\pp V_2 /\gl_2) \to (\pp V_6/\gl_2)$
for the cubing maps.
Denote by $S_{ij} = s_1^i \times s_3^j$ (recall Definition~\ref{skr}), and by $s_{ij}$ the pushforward of $S_{ij}$ under the map
\[\theta \colon (\pp V_1 \times \pp V_3)/\gl_2 \to \pp V_6/\gl_2 \quad\text{defined by}\quad (f, g) \mapsto f^3 \cdot g.\]
\end{defi}

\noindent
Observe that we have the following commutative diagram, 
\[\begin{CD}
(\pp V_1 \times \pp V_1)/\gl_2 @>\mathbf{1} \times \text{cub}_1>> (\pp V_1 \times \pp V_3)/\gl_2 \\
@V{\text{multiplication}}VV @VV{\theta}V \\
\pp V_2 /\gl_2 @>\text{cub}_2>> \pp V_6 / \gl_2 \\
\end{CD}\]
In particular,
taking the images of the generators $s_1^i \times s_1^j$ we obtain
from Lemmas~\ref{lm:mult} and~\ref{ver}
(with $c_1 = -\lambda_1$ and $c_2 = \lambda_2$):
\begin{align*}
2 \text{cub}_2(s_2^0) &= 3 s_{02} - 6 \lambda_1 s_{01} + 6 (\lambda_1^2 - \lambda_2) s_{00} \\
\text{cub}_2(s_2^1) &= 3 s_{12} - 6 \lambda_1 s_{11} + 6 (\lambda_1^2 - \lambda_2) s_{10}\\
\text{cub}_2(s_2^1) &= s_{03} - 6 \lambda_2 s_{01} + 6 \lambda_1 \lambda_2 s_{00} \\
\text{cub}_2(s_2^2) &= s_{13} - 6 \lambda_2 s_{11} + 6 \lambda_1 \lambda_2 s_{10}
\end{align*}
Note that this forces the coefficients of $s_{02}$ to be even; we can thus write
\[s_{02}' = \frac{s_{02}}{2}.\]

\begin{lm} \label{aw} The Chow ring $\chow^*((\pp V_6 \smallsetminus T_2) / \gl_2)$ is generated as a module over $\zz[\lambda_1, \lambda_2]$
by the elements $s_6^0$, $s_6^1$, $s_6^2$, $s_6^3$, $s_6^4$, $s_6^5$, and $s_6^6$, subject to the relations
\begin{align*}
3 \cdot [s_{02}' - \lambda_1 s_{01} + (\lambda_1^2 - \lambda_2) s_{00}] &= 0 \\
3 \cdot [s_{12} - 2 \lambda_1 s_{11} + 2 (\lambda_1^2 - \lambda_2) s_{10}] &= 0 \\
s_{13} - 6 \lambda_2 s_{11} + 6 \lambda_1 \lambda_2 s_{10} &= 0.
\end{align*}
Moreover, assuming these three elements are linearly independent over $\zz[\lambda_1, \lambda_2]$,
we have $\chow^*((\pp V_6 \smallsetminus T_2) / \gl_2, 1; \zz_\ell) = 0$.
\end{lm}
\begin{proof}
This is immediate from Lemmas~\ref{groth} and~\ref{basismod} together with the localization exact sequence:
\begin{multline*}
\cdots \to \chow^* (\pp V_6 / \gl_2, 1; \zz_\ell) = 0 \to \chow^*((\pp V_6 \smallsetminus T_2) / \gl_2, 1; \zz_\ell) \\
\to \chow^{*-4}(T_2)_{\bar{k}} \otimes \zz_\ell \to \chow^* (\pp V_6 / \gl_2)_{\bar{k}} \otimes \zz_\ell \to \chow^*((\pp V_6 \smallsetminus T_2) / \gl_2)_{\bar{k}} \otimes \zz_\ell \to 0. \qedhere
\end{multline*}
\end{proof}

\begin{lm} \label{lm:z} The Chow ring $\chow^*((\mm \smallsetminus \Delta_1) / \mathbb{G}_m)$ is generated as a module over $\zz[\lambda_1, \lambda_2]$
by the elements $s_6^0$, $s_6^1$, $s_6^2$, $s_6^3$, $s_6^4$, $s_6^5$, and $s_6^6$, subject to the relations
\[s_{00} = s_{10} = s_{01} = s_{11} = s_{02}' = s_{12} = s_{13} = 0.\]
Moreover, assuming these seven elements are linearly independent over $\zz[\lambda_1, \lambda_2]$,
we have $\chow^*((\mm \smallsetminus \Delta_1) / \mathbb{G}_m, 1; \zz_\ell) = 0$.
\end{lm}
\begin{proof}
From the localization exact sequence
\[\cdots \to \chow^* ((\pp V_1 \times \pp V_1) / \gl_2) \to \chow^* ((\pp V_1 \times \pp V_3) / \gl_2) \to \chow^* (T_1) \to 0,\]
it follows that $\chow^*(T_1)$ is generated as a module over $\zz[\lambda_1, \lambda_2]$
by $S_{00}$, $S_{01}$, $S_{02}$, $S_{03}$, $S_{10}$, $S_{11}$, $S_{12}$, and $S_{13}$,
with the relations
\begin{align*}
3 S_{02} - 6 \lambda_1 S_{01} + 6 (\lambda_1^2 - \lambda_2) S_{00} &= 0 \\
3 S_{12} - 6 \lambda_1 S_{11} + 6 (\lambda_1^2 - \lambda_2) S_{10} &= 0\\
S_{03} - 6 \lambda_2 S_{01} + 6 \lambda_1 \lambda_2 S_{00} &= 0 \\
S_{13} - 6 \lambda_2 S_{11} + 6 \lambda_1 \lambda_2 S_{10} &= 0.
\end{align*}
Equivalently, eliminating $S_{03}$ and $S_{13}$ via the final two relations and writing
\[x = S_{02} - 2 \lambda_1 S_{01} + 2 (\lambda_1^2 - \lambda_2) S_{00} \tand y = S_{12} - 2 \lambda_1 S_{11} + 2 (\lambda_1^2 - \lambda_2) S_{10},\]
the Chow ring $\chow^*(T_1)$ is generated as a module over $\zz[\lambda_1, \lambda_2]$ by
$S_{00}$, $S_{01}$, $S_{10}$, $S_{11}$, $x$ and $y$, with relations
\[3x = 3y = 0.\]

Note that the assumed independence of $s_{00}$, $s_{10}$, $s_{01}$, $s_{11}$, $s_{02}'$, $s_{12}$, and $s_{13}$
implies the independence assumed in Lemma~\ref{aw}.
By Lemma~\ref{aw}, the Chow ring $\chow^*((\pp V_6 \smallsetminus T_2) / \gl_2)$ is generated as a module over $\zz[\lambda_1, \lambda_2]$ by
$s_6^0$, $s_6^1$, $s_6^2$, $s_6^3$, $s_6^4$, $s_6^5$, and $s_6^6$, subject to the relations
\begin{align*}
3x' &= 0 \\
3 \cdot \theta(y) &= 0 \\
s_{13} - 6 \lambda_2 s_{11} + 6 \lambda_1 \lambda_2 s_{10} &= 0,
\end{align*}
where $\theta(x) = 2x'$, and $\chow^*((\pp V_6 \smallsetminus T_2) / \gl_2, 1; \zz_\ell) = 0$.
Using the localization exact sequence
\[\chow^{*-2}(T_1) \to \chow^*((\pp V_6 \smallsetminus T_2) / \gl_2) \to \chow^*((\mm \smallsetminus \Delta_1) / \mathbb{G}_m) \to 0,\]
we conclude that
the Chow ring $\chow^* ((\mm \smallsetminus \Delta_1) / \mathbb{G}_m)$ is generated as a module over $\zz[\lambda_1, \lambda_2]$
by the elements $s_6^0$, $s_6^1$, $s_6^2$, $s_6^3$, $s_6^4$, $s_6^5$, and $s_6^6$, subject to the relations
\[s_{00} = s_{10} = s_{01} = s_{11} = x' = \theta(y) = s_{13} - 6 \lambda_2 s_{11} + 6 \lambda_1 \lambda_2 s_{10} = 0.\]
Moreover, assuming these seven elements are linearly independent over $\zz[\lambda_1, \lambda_2]$,
the localization sequence
\begin{multline*}
\cdots \to \chow^*((\pp V_6 \smallsetminus T_2) / \gl_2, 1; \zz_\ell) \to \chow^*((\mm \smallsetminus \Delta_1) / \mathbb{G}_m, 1; \zz_\ell) \\
\to \chow^{*-2}(T_1)_{\bar{k}} \otimes \zz_\ell \to \chow^*((\pp V_6 \smallsetminus T_2) / \gl_2)_{\bar{k}} \otimes \zz_\ell \to \cdots
\end{multline*}
implies $\chow^*((\mm \smallsetminus \Delta_1) / \mathbb{G}_m, 1; \zz_\ell) = 0$.
Since
\[x' = s_{02}' - \lambda_1 s_{01} + (\lambda_1^2 - \lambda_2) s_{00} \tand y = s_{12} - 2 \lambda_1 s_{11} + 2 (\lambda_1^2 - \lambda_2) s_{10},\]
the submodule generated by these seven elements is the same as the
submodule generated by the seven elements given in the statement of the lemma.
\end{proof}

We now combine Lemmas~\ref{lm:mult} and~\ref{ver}
to calculate $s_{ij}$ by pushing forward along $\theta$.
For this, we factor $\theta$ as cubing the first factor followed by multiplication:
\[(\pp V_1 \times \pp V_3)/\gl_2 \to (\pp V_3 \times \pp V_3)/\gl_2 \to \pp V_6/\gl_2.\]
The results are as follows:
\begin{align*}
s_{10} \colon s_1^1 \times s_3^0 &\mapsto [s_3^3 - 6 \lambda_2 s_3^1 + 6\lambda_1 \lambda_2 s_3^0] \times s_6^0 \mapsto s_6^3 - 60 \lambda_2 s_6^1 + 120\lambda_1\lambda_2 s_6^0 \\
s_{11} \colon s_1^1 \times s_3^1 &\mapsto [s_3^3 - 6 \lambda_2 s_3^1 + 6\lambda_1 \lambda_2 s_3^0] \times s_6^1 \mapsto s_6^4 - 36 \lambda_2 s_6^2 + 60\lambda_1\lambda_2 s_6^1 \\
s_{12} \colon s_1^1 \times s_3^2 &\mapsto [s_3^3 - 6 \lambda_2 s_3^1 + 6\lambda_1 \lambda_2 s_3^0] \times s_6^2 \mapsto s_6^5 - 18 \lambda_2 s_6^3 + 24\lambda_1\lambda_2 s_6^2 \\
s_{13} \colon s_1^1 \times s_3^3 &\mapsto [s_3^3 - 6 \lambda_2 s_3^1 + 6\lambda_1 \lambda_2 s_3^0] \times s_6^3 \mapsto s_6^6 - 6 \lambda_2 s_6^4 + 6\lambda_1\lambda_2 s_6^3 \\
s_{00} \colon s_1^0 \times s_3^0 &\mapsto [3 s_3^2 - 6 \lambda_1 s_3^1 + 6(\lambda_1^2 - \lambda_2)s_3^0] \times s_3^0 \mapsto 12 s_6^2 - 60 \lambda_1 s_6^1 + 120(\lambda_1^2 - \lambda_2)s_6^0 \\
s_{01} \colon s_1^0 \times s_3^1 &\mapsto [3 s_3^2 - 6 \lambda_1 s_3^1 + 6(\lambda_1^2 - \lambda_2)s_3^0] \times s_3^1 \mapsto 9 s_6^3 - 36 \lambda_1 s_6^2 + 60(\lambda_1^2 - \lambda_2)s_6^1 \\
&\Rightarrow s_{02}' = 3 s_6^4 - 9 \lambda_1 s_6^3 + 12(\lambda_1^2 - \lambda_2)s_6^2.
\end{align*}

\begin{lm} We have $\chow^*((\mm \smallsetminus \Delta_1) / \mathbb{G}_m, 1; \zz_\ell) = 0$.
\end{lm}
\begin{proof}
We just have to check that $s_{10}$, $s_{11}$, $s_{12}$, $s_{13}$, $s_{00}$, $s_{01}$, and $s_{02}'$
are linearly independent over $\zz[\lambda_1, \lambda_2]$.
For this, we use the above expressions for them in the basis
$s_6^0$, $s_6^1$, $s_6^2$, $s_6^3$, $s_6^4$, $s_6^5$, and $s_6^6$,
which reduces our problem to checking that the following matrix is nonsingular:
\[\begin{array}{ccccccc}
120 \lambda_1 \lambda_2 & -60 \lambda_2 & 0 & 1 & 0 & 0 & 0 \\
0 & 60 \lambda_1 \lambda_2 & -36 \lambda_2 & 0 & 1 & 0 & 0\\
0 & 0 & 24 \lambda_1 \lambda_2 & -18 \lambda_2 & 0 & 1 & 0 \\
0 & 0 & 0 & 6 \lambda_1 \lambda_2 & -6 \lambda_2 & 0 & 1 \\
120(\lambda_1^2 - \lambda_2) & -60 \lambda_1 & 12 & 0 & 0 & 0 & 0 \\
0 & 60(\lambda_1^2 - \lambda_2) & -36 \lambda_1 & 9 & 0 & 0 & 0 \\
0 & 0 & 12(\lambda_1^2 - \lambda_2) & -9 \lambda_1 & 3 & 0 & 0 \\
\end{array}\]
It thus remains to note that this matrix has determinant
$86400(\lambda_1^2 - 4\lambda_2)^3 \neq 0$.
\end{proof}

\begin{lm} The Chow ring of $(\mm \smallsetminus \Delta_1) / \mathbb{G}_m$ is given by
\[\chow^*((\mm \smallsetminus \Delta_1) / \mathbb{G}_m) = \zz[\lambda_1, \lambda_2, t] / (s_{00}, s_{10}, s_{02}').\]
\end{lm}
\begin{proof}
As $\theta^* t$ restricts to a hyperplane class on the $\pp^3$ factor,
push-pull implies $s_{ij} \in (s_{00}, s_{10})$ for any $i$ and $j$,
as well as $p \in (s_{00}, s_{10})$. The desired result now follows from Lemmas~\ref{lm:z}
and~\ref{groth}.
\end{proof}

To compute the Chow ring of $(\mm \smallsetminus \Delta_1) / \mathbb{G}_m$ more explicitly, we first use the recursion
of Lemma~\ref{rec}.
to compute first few the~$s_6^j$:
\begin{align*}
s_6^0 &= 1 \\
s_6^1 &= t \\
s_6^2 &= t^2 - \lambda_1 t + 6 \lambda_2 \\
s_6^3 &= t^3 - 3 \lambda_1 t^2 + (2 \lambda_1^2 + 16 \lambda_2) t - 12 \lambda_1 \lambda_2 \\
s_6^4 &= t^4 - 6\lambda_1 t^3 + (11 \lambda_1^2 + 28 \lambda_2) t^2 + (-6\lambda_1^3 - 72\lambda_1 \lambda_2) t + 36 \lambda_1^2 \lambda_2 + 72 \lambda_2^2.
\end{align*}
Substituting these into the expressions for $s_{ij}$ given above,
we obtain:
\begin{align*}
s_{10} &= t^3 - 3 \lambda_1 t^2 + (2 \lambda_1^2 - 44 \lambda_2) t + 108 \lambda_1 \lambda_2 \\
&= 20 \lambda_1 \lambda_2 + (t^2 - \lambda_1 t - 44 \lambda_2) \cdot (t - 2 \lambda_1) \\
s_{00} &= 12 t^2 - 72 \lambda_1 t + 120\lambda_1^2 - 48 \lambda_2 \\
&= 24 \lambda_1^2 - 48 \lambda_2 + (12 t - 48 \lambda_1) \cdot (t - 2 \lambda_1)\\
s_{02}' &= 3 t^4 - 27\lambda_1 t^3 + (72 \lambda_1^2 + 72 \lambda_2) t^2 - (48 \lambda_1^3 + 348 \lambda_1 \lambda_2) t + 288 \lambda_1^2 \lambda_2 + 144 \lambda_2^2 \\
&= -60(\lambda_1^2 - 4\lambda_2)(t - 3 \lambda_1) \cdot (t - 2 \lambda_1) + (3t - 6 \lambda_1) s_{10} - (\lambda_1 t - 3 \lambda_1^2 + 3 \lambda_2) s_{00}.
\end{align*}
In other words, the Chow ring of $(\mm \smallsetminus \Delta_1) / \mathbb{G}_m$ is generated by $\lambda_1$, $\lambda_2$, and $t$ with relations:
\begin{align*}
20 \lambda_1 \lambda_2 + (t^2 - \lambda_1 t - 44 \lambda_2) \cdot (t - 2 \lambda_1) &= 0 \\
24 \lambda_1^2 - 48 \lambda_2 + (12 t - 48 \lambda_1) \cdot (t - 2 \lambda_1) &= 0 \\
60(\lambda_1^2 - 4\lambda_2)(t - 3 \lambda_1) \cdot (t - 2 \lambda_1) &= 0.
\end{align*}

\begin{thm} \label{45} The Chow ring of $\mm \smallsetminus \Delta_1$ is given by:
\[\chow^*(\mm \smallsetminus \Delta_1) = \zz[\lambda_1, \lambda_2] / (24 \lambda_1^2 - 48 \lambda_2, 20 \lambda_1 \lambda_2).\]
In addition, $\chow^*(\mm \smallsetminus \Delta_1, 1; \zz_\ell)$ 
is generated by two classes in degrees $4$ and $5$ respectively.
\end{thm}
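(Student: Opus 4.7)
The plan is to deduce both assertions from the presentation of $B := (\mm \smallsetminus \Delta_1)/\mathbb{G}_m$ just established, together with the $\mathbb{G}_m$-bundle structure $\mm \smallsetminus \Delta_1 \to B$. This bundle realizes $\mm \smallsetminus \Delta_1$ as the complement of the zero section of a line bundle with Chern class $u := t - 2\lambda_1$, yielding a localization sequence
\[\chow^{p-1}(B) \xrightarrow{\cdot u} \chow^p(B) \to \chow^p(\mm \smallsetminus \Delta_1) \to 0,\]
with an analogous long exact sequence in higher Chow groups with $\zz_\ell$-coefficients.

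For the Chow ring statement, I would simply set $u = 0$ (equivalently $t = 2\lambda_1$) in the three relations defining $\chow^*(B)$. The first two reduce to $20\lambda_1\lambda_2$ and $24\lambda_1^2 - 48\lambda_2$, while the third is visibly divisible by $u$ and collapses entirely. This collapse is what leaves only two relations in the quotient, producing the stated presentation at once.

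For the higher Chow statement, the vanishing $\chow^*(B, 1; \zz_\ell) = 0$ from the preceding lemma turns the long exact sequence into the identification
\[\chow^p(\mm \smallsetminus \Delta_1, 1; \zz_\ell) \simeq \ker\bigl(\cdot u \colon \chow^{p-1}(B)_{\bar{k}} \otimes \zz_\ell \to \chow^p(B)_{\bar{k}} \otimes \zz_\ell\bigr),\]
so the problem reduces to computing this annihilator. I would write each relation $r_i$ in the form $c_i + u p_i$ with $c_3 = 0$, so that the condition $fu \in (r_1, r_2, r_3)$ in $\zz[\lambda_1,\lambda_2,u]$ forces, upon reducing modulo $u$, the pair $(a_1|_{u=0}, a_2|_{u=0})$ to be a syzygy of $(20\lambda_1\lambda_2,\, 24\lambda_1^2 - 48\lambda_2)$ in $\zz[\lambda_1,\lambda_2]$. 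Over this UFD the syzygy module is principal, generated by $(6(\lambda_1^2 - 2\lambda_2),\, -5\lambda_1\lambda_2)$, namely the Koszul syzygy divided by its content~$4$. Back-substitution then exhibits the annihilator as generated by two classes: the degree-$3$ element $r_3/u = 60(\lambda_1^2 - 4\lambda_2)(t - 3\lambda_1)$, and a degree-$4$ element produced by the nontrivial syzygy. Under the Gysin degree shift these give the advertised higher Chow generators in degrees $4$ and $5$.

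The main obstacle is to verify that both candidate generators are genuinely nonzero in $\chow^*(B)_{\bar{k}} \otimes \zz_\ell$ and are not mutually redundant. I would handle both by expanding each in the explicit $\zz[\lambda_1,\lambda_2]$-basis $s_6^0, \ldots, s_6^6$ from Lemma~\ref{lm:z} and checking nontriviality by direct linear algebra, in the same spirit as the nonsingular-matrix computation that closed out the preceding lemma. Both classes are expected to be $2$-torsion, matching the introduction's observation that these contributions are what drives the subtleties at $\ell = 2$.
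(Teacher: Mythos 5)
Your proposal is correct and follows essentially the same route as the paper: both identify $\chow^*(\mm \smallsetminus \Delta_1)$ with the cokernel, and $\chow^*(\mm \smallsetminus \Delta_1, 1; \zz_\ell)$ with the kernel, of multiplication by $t - 2\lambda_1$ on $\chow^*((\mm \smallsetminus \Delta_1)/\mathbb{G}_m)$, and your UFD/syzygy argument simply makes precise the kernel computation that the paper declares ``evident'' (your generator $(6(\lambda_1^2 - 2\lambda_2), -5\lambda_1\lambda_2)$ reproduces, up to sign, the paper's degree-$4$ class $5\lambda_1\lambda_2(12t - 48\lambda_1) - (6\lambda_1^2 - 12\lambda_2)(t^2 - \lambda_1 t - 44\lambda_2)$). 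The only superfluous step is your final paragraph: the theorem asserts generation by two classes, not their nonvanishing or independence, so no further linear-algebra verification is needed.
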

\begin{proof}
Note that $\mm \smallsetminus \Delta_1$ is a $\mathbb{G}_m$ bundle over $(\mm \smallsetminus \Delta_1) / \mathbb{G}_m$, with Chern class
$t - 2 \lambda_1$. From the localization sequence
\[\chow^*((\mm \smallsetminus \Delta_1) / \mathbb{G}_m) \to \chow^*((\mm \smallsetminus \Delta_1) / \mathbb{G}_m) \to \chow^*(\mm \smallsetminus \Delta_1) \to 0,\]
we learn that $\chow^*(\mm \smallsetminus \Delta_1)$ is given by the cokernel
of multiplication by $t - 2\lambda_1$ on the Chow ring $\chow^*((\mm \smallsetminus \Delta_1) / \mathbb{G}_m)$,
which is evidently the ring given in the statement of the theorem.

Moreover, from the localization sequence
\begin{multline*}
\cdots \to \chow^*((\mm \smallsetminus \Delta_1) / \mathbb{G}_m, 1; \zz_\ell) = 0 \to \chow^*(\mm \smallsetminus \Delta_1, 1; \zz_\ell) \\
\to \chow^{*-1}((\mm \smallsetminus \Delta_1) / \mathbb{G}_m)_{\bar{k}} \otimes \zz_\ell \to \chow^*((\mm \smallsetminus \Delta_1) / \mathbb{G}_m)_{\bar{k}} \otimes \zz_\ell \to \cdots,
\end{multline*}
we learn that $\chow^*(\mm \smallsetminus \Delta_1, 1; \zz_\ell)$
is given by the kernel of multiplication by
$t - 2\lambda_1$ on $\chow^{*-1}((\mm \smallsetminus \Delta_1) / \mathbb{G}_m)_{\bar{k}} \otimes \zz_\ell$.
This kernel is
evidently generated by the class
\[60(\lambda_1^2 - 4\lambda_2)(t - 3 \lambda_1)\]
in degree $3$, and the class
\[5 \lambda_1 \lambda_2 \cdot (12 t - 48 \lambda_1) - (6 \lambda_1^2 - 12 \lambda_2) \cdot (t^2 - \lambda_1 t - 44 \lambda_2)\]
in degree $4$.
\end{proof}

Observe that the twisting
$\mathbb{G}_m$-action extends over the whole of $\mm$.
Indeed, consider the universal family of curves over $\mm$, pulled back to $\mm \times \mathbb{G}_m$.
There is a natural action of $\zz/2\zz$ via the hyperelliptic involution on the universal family over $\mm$
composed with
multiplication by $-1$ on $\mathbb{G}_m$;
the quotient of this family by this action gives a family of curves over
$\mm \times (\mathbb{G}_m / (\zz/2\zz))$ --- or, using the isomorphism $(\mathbb{G}_m / (\zz/2\zz)) \simeq \mm \times \mathbb{G}_m$
given by the squaring map --- a family of curves over $\mm \times \mathbb{G}_m$, i.e.\
an action of $\mathbb{G}_m$ on $\mm$.
By inspection, this restricts to the twisting action defined above
on $\mm \smallsetminus \Delta_1$.

Our next task is to leverage
the fact that the $\mathbb{G}_m$-action extends
over the whole of $\mm$ to compute
the image of the degree $5$ generator in $\chow^5(\mm \smallsetminus \Delta_1, 1; \zz_\ell)$
in $\chow^4(\Delta_1)_{\bar{k}} \otimes \zz_\ell$ under the boundary map in the localization sequence
\[\cdots \to \chow^5(\mm \smallsetminus \Delta_1, 1; \zz_\ell) \to \chow^4(\Delta_1)_{\bar{k}} \otimes \zz_\ell \to \chow^5(\mm)_{\bar{k}} \otimes \zz_\ell \to \cdots.\]
First observe that,
since $\chow^*(\mm \smallsetminus \Delta_1, 1; \zz_\ell)$ is generated in degrees
$4$ and $5$, any degree~$2$ or~$3$ class
$x \in \chow^*(\mm)_{\bar{k}} \otimes \zz_\ell$ whose image in $\chow^*(\mm \smallsetminus \Delta_1)_{\bar{k}} \otimes \zz_\ell$ vanishes
is the image of a unique degree~$1$ or~$2$ class in $\chow^*(\Delta_1)_{\bar{k}} \otimes \zz_\ell$,
which we denote $\bar{x}$.

\begin{lm} \label{im5}
The image of the degree $5$ generator in $\chow^5(\mm \smallsetminus \Delta_1, 1; \zz_\ell)$
in $\chow^4(\Delta_1)_{\bar{k}} \otimes \zz_\ell$ 
is given by
\[5 \lambda_1 \lambda_2 \cdot (\overline{24 \lambda_1^2 - 48 \lambda_2}) - (6 \lambda_1^2 - 12 \lambda_2) \cdot \overline{20 \lambda_1 \lambda_2}.\]
\end{lm}
\begin{proof}
Consider the digram \\
\scalebox{0.89}{\parbox{\textwidth}{
$$\begin{CD}
0 @>>> \chow^* (\Delta_1 / \mathbb{G}_m)_{\bar{k}} \otimes \zz_\ell @>>> \chow^* (\mm/\mathbb{G}_m)_{\bar{k}} \otimes \zz_\ell @>>> \chow^*((\mm \smallsetminus \Delta_1) / \mathbb{G}_m)_{\bar{k}} \otimes \zz_\ell @>>> 0 \\
@. @VVV @VVV @VVV @. \\
0 @>>> \chow^* (\Delta_1 / \mathbb{G}_m)_{\bar{k}} \otimes \zz_\ell @>>> \chow^* (\mm/\mathbb{G}_m)_{\bar{k}} \otimes \zz_\ell @>>> \chow^*((\mm \smallsetminus \Delta_1) / \mathbb{G}_m)_{\bar{k}} \otimes \zz_\ell @>>> 0,
\end{CD}$$}} \\
where the vertical maps are multiplication by $t - 2\lambda_1$; since $\chow^*((\mm \smallsetminus \Delta_1) / \mathbb{G}_m, 1; \zz_\ell) = 0$, the rows are exact.
The snake lemma then gives us a map
from the kernel of multiplication by $t - 2 \lambda_1$ on $\chow^*((\mm \smallsetminus \Delta_1) / \mathbb{G}_m)_{\bar{k}} \otimes \zz_\ell$ to the cokernel of multiplication
by $t - 2 \lambda_1$ on $\chow^* (\Delta_1 / \mathbb{G}_m)_{\bar{k}} \otimes \zz_\ell$;
under the identification of the kernel of multiplication by $t - 2 \lambda_1$ on $\chow^*((\mm \smallsetminus \Delta_1) / \mathbb{G}_m)_{\bar{k}} \otimes \zz_\ell$
with $\chow^*(\mm \smallsetminus \Delta_1, 1; \zz_\ell) = 0$,
and the identification of the cokernel of multiplication by $t - 2 \lambda_1$ on $\chow^* (\Delta_1 / \mathbb{G}_m)_{\bar{k}} \otimes \zz_\ell$
with $\chow^* (\Delta_1)_{\bar{k}} \otimes \zz_\ell$, this map
corresponds to the boundary map appearing the in the localization exact sequence.

We have a natural lift of $t - 2\lambda_1$ (and thus $t$) corresponding to the extension of the $\mathbb{G}_m$-action.
Given a class in $x \in \chow^* (\mm / \mathbb{G}_m)_{\bar{k}} \otimes \zz_\ell$, whose image in $\chow^*((\mm \smallsetminus \Delta_1) / \mathbb{G}_m)_{\bar{k}} \otimes \zz_\ell$ is zero,
we also write $\bar{x}$ for the class in $\chow^* (\Delta_1 / \mathbb{G}_m)_{\bar{k}} \otimes \zz_\ell$
mapping to $x$.

Now the map arising from the snake lemma can be described as follows: Given a class in the kernel of
multiplication by $t - 2 \lambda_1$ on $\chow^*((\mm \smallsetminus \Delta_1) / \mathbb{G}_m)_{\bar{k}} \otimes \zz_\ell$, we pick a lift of this class to $\chow^* (\mm / \mathbb{G}_m)_{\bar{k}} \otimes \zz_\ell$;
we multiply that lift by $t - 2 \lambda_1$; we write that as the image (uniquely)
of a class in $\chow^* (\Delta_1 / \mathbb{G}_m)_{\bar{k}} \otimes \zz_\ell$; and finally we reduce that class
modulo $t - 2\lambda_1$ to obtain a class in the cokernel
of multiplication by $t - 2 \lambda_1$ on $\chow^* (\Delta_1 / \mathbb{G}_m)_{\bar{k}} \otimes \zz_\ell$.
Following that recipe for our degree $5$ class, we obtain
\[\overline{(t - 2 \lambda_1) \cdot [5 \lambda_1 \lambda_2 \cdot (12 t - 48 \lambda_1) - (6 \lambda_1^2 - 12 \lambda_2) \cdot (t^2 - \lambda_1 t - 44 \lambda_2)]} \mod t - 2\lambda_1.\]
Writing
\begin{multline*}
(t - 2 \lambda_1) \cdot [5 \lambda_1 \lambda_2 \cdot (12 t - 48 \lambda_1) - (6 \lambda_1^2 - 12 \lambda_2) \cdot (t^2 - \lambda_1 t - 44 \lambda_2)] \\
= (5 \lambda_1 \lambda_2) \cdot [24 \lambda_1^2 - 48 \lambda_2 + (12 t - 48 \lambda_1)(t - 2 \lambda_1)] \\
- (6 \lambda_1^2 - 12 \lambda_2) \cdot [20 \lambda_1 \lambda_2 + (t^2 - \lambda_1 t - 44 \lambda_2)(t - 2 \lambda_1)],
\end{multline*}
we obtain
\begin{multline*}
(5 \lambda_1 \lambda_2) \cdot [\overline{24 \lambda_1^2 - 48 \lambda_2 + (12 t - 48 \lambda_1)(t - 2 \lambda_1)} \mod t - 2\lambda_1] \\
- (6 \lambda_1^2 - 12 \lambda_2) \cdot [\overline{20 \lambda_1 \lambda_2 + (t^2 - \lambda_1 t - 44 \lambda_2)(t - 2 \lambda_1)} \mod t - 2\lambda_1].
\end{multline*}

Since $\overline{24 \lambda_1^2 - 48 \lambda_2 + (12 t - 48 \lambda_1)(t - 2 \lambda_1)}$ mod $t - 2\lambda_1$
is a class in degree $2$, and the map $\chow^2(\Delta_1)_{\bar{k}} \otimes \zz_\ell \to \chow^2(\mm)_{\bar{k}} \otimes \zz_\ell$ is injective,
this class is determined by its image in $\chow^2(\mm)_{\bar{k}} \otimes \zz_\ell$ --- which is just $24 \lambda_1^2 - 48 \lambda_2$.
Consequently this class can be written simply as $\overline{24 \lambda_1^2 - 48 \lambda_2}$.
Reasoning similarly for the second term, we deduce the formula given in the statement of the theorem.
\end{proof}

\section{\label{sec:tofinite} Reduction to three nonvanishing statements}

In this section, we show Theorem~\ref{thm:main} holds, provided that the classes
\[\delta_1(\delta_1 + \lambda_1) \lambda_1^2, \quad \delta_1(\delta_1 + \lambda_1) \lambda_2, \tand \delta_1(\delta_1 + \lambda_1) (\lambda_1^2 + \lambda_2)\]
are all nonvanishing in $\chow^*(\mm) \otimes \zz/2\zz$
(which we assume for the remainder of this section).
Since
\[\chow^*(\Delta_1) = \zz[\lambda_1, \lambda_2, \gamma] / (2 \gamma, \gamma^2 + \lambda_1 \gamma, 24 \lambda_1^2 - 48 \lambda_2, 24 \lambda_1 \lambda_2) \]
is invariant under extension of the base field from $k$ to $\bar{k}$,
and only has torsion coprime to the characteristic (as we have assumed
the characteristic is distinct from $2$ and $3$),
the injectivity of
\[\chow^*(\Delta_1)_{\bar{k}} \otimes \zz_\ell \to \chow^{*+1}(\mm)_{\bar{k}} \otimes \zz_\ell\]
for $\ell$ coprime to the characteristic implies the injectivity of
\[\chow^*(\Delta_1) \to \chow^{*+1}(\mm).\]

\subsection{\boldmath Injectivity in degrees $0$, $1$, and $2$}
By Theorem~\ref{45},
\[\chow^*(\Delta_1)_{\bar{k}} \otimes \zz_\ell \to \chow^{*+1}(\mm)_{\bar{k}} \otimes \zz_\ell\]
is injective in degrees $0$, $1$, and $2$, for all $\ell$ coprime to the characteristic.
Consequently, $\chow^*(\Delta_1) \to \chow^{*+1}(\mm)$ is injective in degrees $0$, $1$, and~$2$.

\subsection{Relations from Grothendieck--Riemann--Roch}
Here we show that the relations established in Section~\ref{relgrr}
in characteristic zero, and modulo torsion in arbitrary characteristic,
in fact hold exactly in arbitrary characteristic distinct from $2$ and $3$.

The injectivity of
$\chow^0(\Delta_1) \to \chow^1(\mm)$,
combined with Theorems~\ref{adelta1} and~\ref{45}, implies that
$\chow^1(\mm)$ is torsion-free.
Thus, the formula in Lemma~\ref{delta0} holds exactly, not merely modulo torsion.
Combined with Lemma~\ref{delta0lambda2}, this implies the relation
\begin{equation} \label{rel3}
20 \lambda_1 \lambda_2 - 4 \delta_1 \lambda_2 = 0.
\end{equation}

Next, the relation $24\lambda_1^2 - 48 \lambda_2 = 0$
holds on $\mm \smallsetminus \Delta_1$ by Theorem~\ref{45},
and holds on $\mm$ modulo torsion by Lemma~\ref{lm2448}.
Since $\chow^1(\Delta_1) \to \chow^2(\mm)$ is injective,
$24\lambda_1^2 - 48 \lambda_2$ must equal the pushforward
of a torsion class in $\chow^1(\Delta_1)$.
But the only nonzero torsion class in $\chow^1(\Delta_1)$ is $\gamma$,
whose pushforward is $\delta_1(\delta_1 + \lambda_1)$ by Lemma~\ref{delta1delta1}.
Thus either
\[24\lambda_1^2 - 48 \lambda_2 = 0 \tor 24\lambda_1^2 - 48 \lambda_2 = \delta_1(\delta_1 + \lambda_1).\]
This second relation implies $\delta_1(\delta_1 + \lambda_1) = 0 \in \chow^2(\mm) \otimes \zz/2\zz$,
which contradicts our assumption that
$\delta_1(\delta_1 + \lambda_1) \lambda_1^2 \neq 0 \in \chow^4(\mm) \otimes \zz/2\zz$.
Consequently the first relation must hold, i.e.\
\begin{equation} \label{rel2}
24\lambda_1^2 - 48 \lambda_2 = 0.
\end{equation}

\subsection{\boldmath Injectivity in degree $3$}
By Lemma~\ref{delta1delta1}, the kernel of $\chow^3(\Delta_1) \to \chow^4(\mm)$
is contained in the kernel of multiplication by $\gamma - \lambda_1$.
But by inspection, there are only three nonzero elements of this later kernel:
$\gamma \lambda_1^2$, and $\gamma \lambda_2$, and $\gamma(\lambda_1^2 + \lambda_2)$.
Pushing these classes forward to $\chow^4(\mm)$ using Lemma~\ref{delta1delta1},
we obtain exactly the three classes whose reduction modulo $2$
we have assumed is nonvanishing.

\subsection{Injectivity in higher degrees}
Combining the injectivity we have established in degrees $0$, $1$, $2$, and $3$
with Theorem~\ref{45} and Lemma~\ref{im5}, it remains to show
\[5 \lambda_1 \lambda_2 \cdot (\overline{24 \lambda_1^2 - 48 \lambda_2}) - (6 \lambda_1^2 - 12 \lambda_2) \cdot \overline{20 \lambda_1 \lambda_2} = 0 \in \chow^4(\Delta_1).\]
But using the relations \eqref{rel2} and \eqref{rel3}, we have
\begin{align*}
5 \lambda_1 \lambda_2 \cdot (\overline{24 \lambda_1^2 - 48 \lambda_2}) - (6 \lambda_1^2 - 12 \lambda_2) \cdot \overline{20 \lambda_1 \lambda_2} &= 5 \lambda_1 \lambda_2 \cdot 0 - (6 \lambda_1^2 - 12 \lambda_2) \cdot 4 \lambda_2 \\
&= -\lambda_2 \cdot (24 \lambda_1^2 - 48 \lambda_2) \\
&= 0.
\end{align*}

\subsection{Conclusion of proof}
The injectivity established thus far implies we have a short exact sequence
\[0 \to \chow^{*-1}(\Delta_1) \to \chow^*(\mm) \to \chow^*(\mm \smallsetminus \Delta) \to 0,\]
and we have previously (Theorems~\ref{adelta1} and~\ref{45}) established that
\begin{align*}
\chow^*(\Delta_1) &= \zz[\lambda_1, \lambda_2, \gamma] / (2 \gamma, \gamma^2 + \lambda_1 \gamma, 24 \lambda_1^2 - 48 \lambda_2, 24 \lambda_1 \lambda_2) \\
\chow^*(\mm \smallsetminus \Delta) &= \zz[\lambda_1, \lambda_2] / (24 \lambda_1^2 - 48 \lambda_2, 20 \lambda_1 \lambda_2).
\end{align*}
Given Lemma~\ref{delta1delta1}, and the relations \eqref{rel2} and \ref{rel3} that we have established which
extend the relations $24 \lambda_1^2 - 48 \lambda_2 = 0$ and $20 \lambda_1 \lambda_2 = 0$ from $\mm \smallsetminus \Delta_1$
to all of $\mm$,
it follows that $\chow^*(\mm)$ is generated by $\lambda_1$, $\lambda_2$, and $\delta_1$
with relations given by \eqref{rel2}, \eqref{rel3}, and the pushforwards of the relations that hold
in $\chow^*(\Delta_1)$, i.e.:
\begin{align*}
24 \lambda_1^2 - 48 \lambda_2 &= 0 \\
20 \lambda_1 \lambda_2 - 4 \delta_1 \lambda_2 &= 0 \\
[2(\delta_1 + \lambda_1)] \cdot \delta_1 &= 0 \\
[(\delta_1 + \lambda_1)^2 + \lambda_1 (\delta_1 + \lambda_1)] \cdot \delta_1 &= 0 \\
[24 \lambda_1^2 - 48 \lambda_2] \cdot \delta_1 &= 0 \\
[24 \lambda_1 \lambda_2] \cdot \delta_1 &= 0.
\end{align*}
By inspection,
these relations generated exactly the ideal of relations claimed in Theorem~\ref{thm:main}.

\section{\label{sec:biell} Chow ring of the bielliptic stack}

As shown in the previous section,
in order to complete the proof of Theorem~\ref{thm:main}, all that remains is to show that the classes
\[\delta_1(\delta_1 + \lambda_1) \lambda_1^2, \quad \delta_1(\delta_1 + \lambda_1) \lambda_2, \tand \delta_1(\delta_1 + \lambda_1) (\lambda_1^2 + \lambda_2)\]
are all nonvanishing in $\chow^*(\mm) \otimes \zz/2\zz$.
We will do this by pulling these classes back to a certain test family,
given explicitly in Section~\ref{subsec:test}.

\subsection{Motivation for our test family}
(This section is, strictly speaking, not logically necessary.)

Recall that these classes are pushforwards of classes on $\Delta_1$,
which are multiples of the pullback to $\Delta_1$ of the generator of $\chow^*(B(\zz/2\zz))$,
where $\zz/2\zz$ acts by exchanging an ordered pair of elliptic curves.
The fixed point locus of this $\zz/2\zz$-action corresponds to curves in $\Delta_1$
consisting of two isomorphic elliptic curves joined at a point,
and on this locus, the $\zz/2\zz$-action gives rise to the automorphism
that exchanges the two components, which is a bielliptic involution.
This consideration suggests that the universal family of bielliptic curves
would serve as a good test family. To be precise:

\begin{defi} A \emph{bielliptic curve of genus $2$}
is a stable curve $C$ of genus $2$, together with an unordered pair $\{\iota, \iota'\}$
of bielliptic involutions which differ by multiplication by the hyperelliptic involution.
\end{defi}

\begin{lm}
Let $C$ be a bielliptic curve of genus $2$. Then $\dim H^0(\omega_C^2) = 3$,
and the hyperelliptic involution acts trivially on $H^0(\omega_C^2)$.
The bielliptic involution (which is well-defined modulo the hyperelliptic
involution) has a $2$-dimensional trivial eigenspace
and a $1$-dimensional nontrivial eigenspace.
\end{lm}
\begin{proof}
Since $\omega_C^{-1}$ has no sections when $C$ is a stable curve,
Riemann--Roch implies that $\dim H^0(\omega_C^2) = 3$ as desired.
The remaining conditions are then visibly both open and closed, so it suffices
to check them when $C$ is smooth.

In this case, any section of $\omega_C^2$
sent to its negative under the hyperelliptic involution must vanish
at the $6$ Weierstrass points; since $\omega_C^2$ is of degree $4$,
there are no such sections, and so the hyperelliptic involution
acts trivially.

For the action of the bielliptic involution,
write $f \colon C \to E$ for the quotient map.
Then there is an evident filtration $H^0(f^* \omega_E^2) \subset H^0(f^* \omega_E \otimes \omega_C) \subset H^0(\omega_C^2)$,
whose quotients are $1$-dimensional, on which the bielliptic involution
acts as $+1$, $-1$, and $+1$ respectively.
\end{proof}

Consequently, every bielliptic curve
comes equipped with a canonical map to $\pp^1$
defined by the sections of $H^0(\omega_C^2)$ invariant under the bielliptic
involution.
By the Riemann--Hurwitz formula, this map is branched over $5$ points
with multiplicity.

Moreover, taking the ramification points of the
quotients of $C$ by the two bielliptic involutions
mapping to $\pp^1$ defines two subsets of $4$ of these $5$ points.
Their intersection then gives a canonically-defined subset of $3$
points.
We conclude that the family of curves in weighted projective space given by
\begin{align*}
z^2 &= (ax + by) \cdot (ex^3 + fx^2y + gxy^2 + hy^3) \\
w^2 &= (cx + dy) \cdot (ex^3 + fx^2y + gxy^2 + hy^3),
\end{align*}
over the locus of triples of two degree $1$ polynomials
and one degree $3$ polynomial whose product has no triple roots,
contains every isomorphism class of bielliptic curves.

Moreover, since the map to $\pp^1$ is canonical, we conclude
that any isomorphism between two curves in this family
is via a linear change of variables on $x$ and $y$,
scalar multiplication and exchanging on $z$ and $w$,
and rescaling the cubic polynomial.
In other words, writing $G = (\cc^\times \times \cc^\times) \rtimes \zz/2\zz$,
any such isomorphism is via the action of $\gl_2 \times G \times \cc^\times$
with $\gl_2$ acting on $(x, y)$, and $G$ acting on $(z, w)$,
and $\cc^\times$ acting on $a, b, c, d$ with weight $-1$ and
$e, f, g, h$ with weight $+1$.
By inspection, the subgroup of $\gl_2 \times G \times \cc^\times$
acting trivially is the image of $\cc^\times$ under the map
$t \mapsto (t \cdot \mathbf{1}, t^4 \cdot \mathbf{1}, t^3)$.

We conclude that the stack of bielliptic curves
is isomorphic to the quotient
of the locus of triples of polynomials of degrees $1$, $1$, and $3$,
whose product has
no triple root,
by the natural action of $(\gl_2 \times G \times \cc^\times) / \cc^\times$ constructed above.

To recast this in a somewhat nicer form, we observe that
$(\gl_2 \times G \times \cc^\times) / \cc^\times$ is itself isomorphic to $\gl_2 \times G$
via the map $\gl_2 \times G \times \cc^\times \to \gl_2 \times G$ defined by
\[A \times B \times t \mapsto \frac{\det A}{t} \cdot A \times \left(\frac{\det A}{t}\right)^4 \cdot B\]
whose kernel is, by inspection, the image of $\cc^\times$ under the map
$t \mapsto (t \cdot \mathbf{1}, t^4 \cdot \mathbf{1}, t^3)$.
An inverse to this map is given by
\[A \times B \mapsto A \times B \times \det A.\]

\subsection{Our test family\label{subsec:test}}
Our test family will be the space
\[[V_3(1) \times (V_1(-1) \boxtimes W_{-2})] / (G \times \gl_2);\]
viewing elements of $V_3(1)$ as cubic polynomials $ex^3 + fx^2y + gxy^2 + hy^3$,
and elements of $V_1(-1) \boxtimes W_{-2}$ as pairs of linear polynomials $(ax + by, cx + dy)$,
there is a family of genus $2$ curves defined over this quotient by:
\begin{align*}
z^2 &= (ax + by) \cdot (ex^3 + fx^2y + gxy^2 + hy^3) \\
w^2 &= (cx + dy) \cdot (ex^3 + fx^2y + gxy^2 + hy^3),
\end{align*}
where $\gl_2$ acts naturally on $(x, y)$ and $G$ acts naturally on $(z, w)$.
This will serve as our test family; the discussion of the previous
subsection shows this is the universal family of bielliptic curves.

\subsection{The zero sections\label{subsec:zero}}
As in Section~\ref{sec:adelta1},
we will use the formulas in Section~\ref{sec:mult} to calculate
loci in the total space of vector bundles with the origin excised,
by pulling back
the corresponding classes from the projectivization.

So we begin by excising the origins of $V_3(1)$ and $V_1(-1) \boxtimes W_{-2}$ respectively;
as in Section~\ref{sec:mult}, this imposes relations given by the vanishing
of the Euler classes $e(V_3(1)) = c_4(V_3(1))$ and
$e(V_1(-1) \boxtimes W_{-2}) = c_4(V_1(-1) \boxtimes W_{-2})$.

Write $a_1$ and $a_2$ for the Chern
roots of $V$;
and $b_1$ and $b_2$ for the Chern roots of $W_2$.
Then the Euler class of $V_3(1)$ is
\begin{align*}
e(V_3(1)) &= (\alpha_1 - 3a_1)(\alpha_1 - 3a_2)(\alpha_1 - 2a_1 - a_2)(\alpha_1 - a_1 - 2a_2) \\
&= (\alpha_1^2 - 3(a_1 + a_2) \alpha_1 + 9 a_1 a_2)(\alpha_1^2 - 3(a_1 + a_2) \alpha_1 + 2(a_1 + a_2)^2 + a_1 a_2) \\
&= (\alpha_1^2 - 3\alpha_1^2 + 9\alpha_2)(\alpha_1^2 - 3\alpha_1^2 + 2\alpha_1^2 + \alpha_2) \\
&= 9 \alpha_2^2 - 2 \alpha_1^2 \alpha_2.
\end{align*}
Similarly, the Euler class of $V_1(-1) \boxtimes W_{-2}$ is
\begin{align*}
e(V_1(-1) \boxtimes W_{-2}) &= (-\alpha_1 - a_1 - b_1)(-\alpha_1 - a_2 - b_1)(-\alpha_1 - a_1 - b_2)(-\alpha_1 - a_2 - b_2) \\
&= \big((\alpha_1 + b_1)^2 + (\alpha_1 + b_1)(a_1 + a_2) + a_1a_2\big) \\
&\qquad \cdot \big((\alpha_1 + b_2)^2 + (\alpha_1 + b_2)(a_1 + a_2) + a_1a_2\big) \\
&= \big((\alpha_1 + b_1)^2 + (\alpha_1 + b_1)\alpha_1 + \alpha_2\big) \cdot \big((\alpha_1 + b_2)^2 + (\alpha_1 + b_2)\alpha_1 + \alpha_2\big) \\
&= (b_1 b_2)^2 + 3\alpha_1 (b_1 + b_2) b_1 b_2 + (2\alpha_1^2 + \alpha_2) (b_1 + b_2)^2 \\
&\qquad + (5\alpha_1^2 - 2\alpha_2) b_1 b_2 + (6 \alpha_1^3 + 3\alpha_1\alpha_2) (b_1 + b_2) + (2 \alpha_1^2 + \alpha_2)^2 \\
&= (4\beta_2)^2 + 3\alpha_1 (2\beta_1 + \gamma) (4 \beta_2) + (2\alpha_1^2 + \alpha_2)(2\beta_1 + \gamma)^2 \\
&\qquad + (5\alpha_1^2 - 2\alpha_2) (4\beta_2) + (6 \alpha_1^3 + 3\alpha_1\alpha_2) (2\beta_1 + \gamma) + (2 \alpha_1^2 + \alpha_2)^2 \\
&= 4\alpha_1^4 + 12\alpha_1^3\beta_1 + 8\alpha_1^2\beta_1^2 + 4\alpha_1^2\alpha_2 + 6\alpha_1\alpha_2\beta_1 + 4\alpha_2\beta_1^2 \\
&\qquad + 20\alpha_1^2\beta_2 + 24\alpha_1\beta_1\beta_2 + \alpha_1\alpha_2\gamma + \alpha_2\beta_1\gamma + \alpha_2^2 - 8\alpha_2\beta_2 + 16\beta_2^2 \\
&\qquad + (2 \alpha_1^2 + a2)(\gamma^2 + \beta_1 \gamma) + (3 \alpha_1^3 + 3 \alpha_1^2 \beta_1 + \alpha_1 \alpha_2 + \alpha_2 \beta_1 + 6 \alpha_1 \beta_2)(2\gamma) \\
&= 4\alpha_1^4 + 12\alpha_1^3\beta_1 + 8\alpha_1^2\beta_1^2 + 4\alpha_1^2\alpha_2 + 6\alpha_1\alpha_2\beta_1 + 4\alpha_2\beta_1^2 \\
&\qquad + 20\alpha_1^2\beta_2 + 24\alpha_1\beta_1\beta_2 + \alpha_1\alpha_2\gamma + \alpha_2\beta_1\gamma + \alpha_2^2 - 8\alpha_2\beta_2 + 16\beta_2^2.
\end{align*}

Next we excise the locus in $V_1(-1) \boxtimes W_{-2}$
where one of the linear forms is zero. In other words, we excise
from $(V_1(-1) \boxtimes W_{-2} \smallsetminus (0 \times 0)) / (G \times \gl_2)$ the closed substack $Z$ given by
\[\big(V_1(-1) \boxtimes (L_{-2} \times 0) \cup V_1(-1) \boxtimes (0 \times L_{-2}) \big) / (G \times \gl_2) \simeq (V_1(-1) \boxtimes (L_{-2} \times 0)) / (\mathbb{G}_m \times \mathbb{G}_m \times \gl_2).\]
As a module over $\chow^*(BG \times B\! \gl_2)$, the Chow ring $\chow^*(Z)$ is generated
by $1$ and $t_1$, so it suffices to determine the pushforwards of $1$ and $t_1$
to $\chow^*([V_1(-1) \boxtimes W_{-2} \smallsetminus (0 \times 0)] / (G \times \gl_2))$.
If we write $a_1$ and $a_2$ for the Chern roots of $V$,
then applying Lemma~\ref{lm:sub}, the class of
\begin{multline*}
[V_1(-1) \boxtimes (L_{-2} \times 0) \smallsetminus (0 \times 0)] / (\mathbb{G}_m \times \mathbb{G}_m \times \gl_2) \\
\subset [V_1(-1) \boxtimes (L_{-2} \oplus L_{-2}) \smallsetminus (0 \times 0)] / (\mathbb{G}_m \times \mathbb{G}_m \times \gl_2)
\end{multline*}
is given by
\begin{align*}
c_2(V_1(-1) \boxtimes (0 \times L_{-2})) &= (-\alpha_1 - 2t_2 - a_1)(-\alpha_1 - 2t_2 - a_2) \\
&= (\alpha_1 + 2t_2)^2 + (\alpha_1 + 2t_2)(a_1 + a_2) + a_1 a_2 \\
&= (\alpha_1 + 2t_2)^2 + (\alpha_1 + 2t_2)\alpha_1 + \alpha_2 \\
&= 4 t_2^2 + 6 \alpha_1 t_2 + 2\alpha_1^2 + \alpha_2.
\end{align*}
Excising this locus therefore introduces relations given by the pushforwards of
\[4 t_2^2 + 6 \alpha_1 t_2 + 2\alpha_1^2 + \alpha_2 \tand (4 t_2^2 + 6 \alpha_1 t_2 + 2\alpha_1^2 + \alpha_2)t_1\]
along $B(\mathbb{G}_m \times \mathbb{G}_m) \to BG$;
by Lemma~\ref{pushbg}, these are
\begin{align*}
4 \cdot (\beta_1^2 + \beta_1 \gamma - 2 \beta_2) + 6 \alpha_1 \cdot (\beta_1 + \gamma) + (2\alpha_1^2 + \alpha_2) \cdot 2 &= 4 \alpha_1^2 + 6 \alpha_1 \beta_1 + 4 \beta_1^2 + 2 \alpha_2 - 8 \beta_2 \\
&\qquad + (3 \alpha_1 + 2 \beta_1) \cdot 2 \gamma \\
&= 4 \alpha_1^2 + 6 \alpha_1 \beta_1 + 4 \beta_1^2 + 2 \alpha_2 - 8 \beta_2,
\end{align*}
respectively
\begin{align*}
4 \cdot (\beta_1 \beta_2 + \gamma \beta_2) + 6 \alpha_1 \cdot 2\beta_2 + (2\alpha_1^2 + \alpha_2) \cdot (\beta_1 + \gamma) &= 2 \alpha_1^2 \beta_1 + \alpha_2 \beta_1 + 12 \alpha_1 \beta_2 + 4 \beta_1 \beta_2 + \alpha_2 \gamma \\
&\qquad + (\alpha_1^2 + 2 \beta_2) \cdot 2 \gamma \\
&= 2 \alpha_1^2 \beta_1 + \alpha_2 \beta_1 + 12 \alpha_1 \beta_2 + 4 \beta_1 \beta_2 + \alpha_2 \gamma.
\end{align*}
To summarize, the relations obtained by excising the loci where one form is zero
are thus:
\begin{align}
\label{relzero}
\begin{split}
9 \alpha_2^2 - 2 \alpha_1^2 \alpha_2 &= 0 \\
4 \alpha_1^2 + 6 \alpha_1 \beta_1 + 4 \beta_1^2 + 2 \alpha_2 - 8 \beta_2 &= 0 \\
2 \alpha_1^2 \beta_1 + \alpha_2 \beta_1 + 12 \alpha_1 \beta_2 + 4 \beta_1 \beta_2 + \alpha_2 \gamma &= 0 \\
4\alpha_1^4 + 12\alpha_1^3\beta_1 + 8\alpha_1^2\beta_1^2 + 4\alpha_1^2\alpha_2 + 6\alpha_1\alpha_2\beta_1 + 4\alpha_2\beta_1^2 + 20\alpha_1^2\beta_2 \qquad & \\
\phantom{a}  + 24\alpha_1\beta_1\beta_2 + \alpha_1\alpha_2\gamma + \alpha_2\beta_1\gamma + \alpha_2^2 - 8\alpha_2\beta_2 + 16\beta_2^2 &= 0
\end{split}
\end{align}

\subsection{The tautological classes\label{subsec:taut}}
Since
\[H^0(\omega_C) = \left\langle \frac{x dy - y dx}{z}, \frac{x dy - y dx}{w} \right\rangle,\]
with the natural
action of $\gl_2$ on $(x, y)$ and $G$ on $(z, w)$,
the Hodge bundle is just the dual of the standard representation of $G$
tensored with the dual of the determinant representation of $\gl_2$.
In particular its Chern classes are
\begin{align*}
\lambda_1 &= - \beta_1 - 2\alpha_1 \\
\lambda_2 &= \alpha_1^2 + \alpha_1 \beta_1 + \beta_2.
\end{align*}

To find the pullback of $\delta_1$ to our family,
we note that the preimage of the boundary stratum $\Delta_1$
is the image of the
Segre map
$V_1(-1) \times W_{-2} \to V_1(-1) \boxtimes W_{-2}$.
So by Lemma~\ref{lm:segre},
\[\delta_1 = c_1(V_1(-1)) + c_1(W_{-2}) = -3 \alpha_1 - 2 \beta_1 + \gamma.\]

\subsection{Excision of triple root loci\label{subsec:triple}}
To excise the triple root loci, we will find the classes of the triple root loci
in the product of projectivizations
\[[\pp V_3(1) \times \pp(V_1(-1) \boxtimes W_{-2})] / (G \times \gl_2) \simeq [\pp V_3 \times \pp (V_1 \boxtimes W_{-2})] / (G \times \gl_2),\]
and pull back to $[V_3(1) \times (V_1(-1) \boxtimes W_{-2})] / (G \times \gl_2)$
by substituting the hyperplane class on $\pp V_3$ for $\alpha_1$,
and the hyperplane class on $\pp (V_1 \boxtimes W_{-2})$ for $-\alpha_1$.

We begin by excising the locus where the cubic form has a triple root.
Substituting the hyperplane class on $\pp V_3$ for $\alpha_1$,
the recursion relation from Lemma~\ref{rec} yields
\begin{align} \label{recv3}
\begin{split}
s^0_3 &= 1 \\
s^1_3 &= \alpha_1 \\
s^2_3 &= 3 \alpha_2 \\
s^3_3 &= \alpha_1 \alpha_2.
\end{split}
\end{align}
Consequently, we can apply
Lemma~\ref{ver} to calculate that excising this locus imposes relations given by the vanishing of
\[3 s_3^2 - 6\alpha_1 s_3^1 + 6(\alpha_1^2 - \alpha_2) = 3\alpha_2 \tand s_3^3 - 6 \alpha_2 s_3^1 + 6 \alpha_1 \alpha_2 = \alpha_1 \alpha_2.\]

We next excise the locus where all three forms share a common factor;
this is the image of the closed immersion
\[[\pp V_2 \times \pp V_1 \times \pp W_{-2}] / (G \times \gl_2) \to [\pp V_3 \times \pp (V_1 \boxtimes W_{-2})] / (G \times \gl_2)\]
defined by
\[(f, g, h) \mapsto (f \cdot g, g \boxtimes h).\]

Note that the hyperplane class on $\pp V_3$, respectively
on $\pp (V_1 \boxtimes W_{-2})$, pulls back to the
sum of the hyperplane classes on $\pp V_2$ and $\pp V_1$,
respectively on $\pp V_1$ and $\pp W_{-2}$.
Since these, along with the hyperplane class on $\pp W_{-2}$,
generate $\chow^*([\pp V_2 \times \pp V_1 \times \pp W_{-2}] / (G \times \gl_2))$
as a module over $\chow^*(BG \times B\!\gl_2)$,
the relations obtained by excision are all generated by
the pushforwards of the fundamental class $1$
and hyperplane class $w = c_1(\oo_{\pp W_{-2}}(1))$ under this map.

To compute these two pushforwards, we factor this map as
the composition of the diagonal map on the $\pp V_1$ factor
\begin{equation} \label{diagpv}
[\pp V_2 \times \pp V_1 \times \pp W_{-2}] / (G \times \gl_2) \to [\pp V_2 \times (\pp V_1 \times \pp V_1) \times \pp W_{-2}] / (G \times \gl_2)
\end{equation}
followed by multiplication and the Segre map
\begin{equation} \label{multsegre}
[(\pp V_2 \times \pp V_1) \times (\pp V_1 \times \pp W_{-2})] / (G \times \gl_2) \to [\pp V_3 \times \pp (V_1 \boxtimes W_{-2})] / (G \times \gl_2).
\end{equation}

Writing $x_1$ and $x_2$ for the hyperplane sections on the two $\pp V_1$ factors,
Lemma~\ref{diag} gives that
the pushforwards of $1$ and $w$ under \eqref{diagpv} are
$x_1 - \alpha_1 + x_2$ and $x_1 w - \alpha_1 w + x_2 w$.
Using Lemmas~\ref{lm:mult} and~\ref{lm:segre},
and writing $x = c_1(\oo_{\pp (V_1 \boxtimes W_{-2})}(1))$, the pushforwards
of these classes under \eqref{multsegre} are
\begin{multline*}
s_3^1 \cdot [2x + c_1(V_1) + c_1(W_{-2})] - \alpha_1 \cdot 3s_3^0 \cdot [2x + c_1(V_1) + c_1(W_{-2})] \\
+ 3s_3^0 \cdot [x^2 + c_1(W_{-2}) x + c_2(W_{-2}) - c_2(V_1)],
\end{multline*}
respectively
\begin{multline*}
s_3^1 \cdot [x^2 + c_1(V_1) x + c_2(V_1) - c_2(W_{-2})] - \alpha_1 \cdot 3s_3^0 \cdot [x^2 + c_1(V_1) x + c_2(V_1) - c_2(W_{-2})] \\
+ 3s_3^0 \cdot \big[x^3 + (c_1(V_1) + c_1(W_{-2})) x^2 + (c_2(V_1) + c_1(V_1) c_1(W_{-2}) + c_2(W_{-2})) x \\
+ c_1(V_1) c_2(W_{-2}) + c_2(V_1) c_1(W_{-2})\big].
\end{multline*}
From Lemma~\ref{rec}, we have that $s_3^0 = 1$, and that
$s_3^1$ is the hyperplane class on $\pp V_3$, which we must
substitute for $\alpha_1$. Substituting $x$ for
$-\alpha_1$, the relations we obtain
from excising this locus are the vanishing of
\begin{multline*}
\alpha_1 \cdot [-2\alpha_1 + c_1(V_1) + c_1(W_{-2})] - 3\alpha_1 \cdot [-2\alpha_1 + c_1(V_1) + c_1(W_{-2})] \\
+ 3[\alpha_1^2 - c_1(W_{-2}) \alpha_1 + c_2(W_{-2}) - c_2(V_1)],
\end{multline*}
respectively the vanishing of
\begin{multline*}
\alpha_1 \cdot [\alpha_1^2 - c_1(V_1) \alpha_1 + c_2(V_1) - c_2(W_{-2})] - 3\alpha_1 \cdot [\alpha_1^2 - c_1(V_1) \alpha_1 + c_2(V_1) - c_2(W_{-2})] \\
+ 3\big[-\alpha_1^3 + (c_1(V_1) + c_1(W_{-2})) \alpha_1^2 - (c_2(V_1) + c_1(V_1) c_1(W_{-2}) + c_2(W_{-2})) \alpha_1 \\
+ c_1(V_1) c_2(W_{-2}) + c_2(V_1) c_1(W_{-2})\big].
\end{multline*}

Since $V_1 = V^*$, we have $c_1(V_1) = -\alpha_1$ and $c_2(V_1) = \alpha_1$;
similarly since $W_{-2} = W_2^*$, Lemma~\ref{npower} gives $c_1(W_{-2}) = \gamma - 2\beta_1$
and $c_2(W_{-2}) = 4\beta_2$. 
Substituting these in and collecting like terms, our relations are simply the vanishing of
\[9 \alpha_1^2 + 10 \alpha_1 \beta_1 + \alpha_1 \gamma - 3 \alpha_2 + 12 \beta_2 - 3 \alpha_1 \cdot 2 \gamma = 9 \alpha_1^2 + 10 \alpha_1 \beta_1 + \alpha_1 \gamma - 3 \alpha_2 + 12 \beta_2,\]
respectively the vanishing of
\begin{multline*}
\alpha_2 \gamma - 10 \alpha_1^3 - 12 \alpha_1^2 \beta_1 - 5 \alpha_1 \alpha_2 - 6 \alpha_2 \beta_1 - 16 \alpha_1 \beta_2 + (3\alpha_1^2 + \alpha_2) \cdot 2 \gamma \\
= \alpha_2 \gamma - 10 \alpha_1^3 - 12 \alpha_1^2 \beta_1 - 5 \alpha_1 \alpha_2 - 6 \alpha_2 \beta_1 - 16 \alpha_1 \beta_2.
\end{multline*}

Finally, we excise the locus where the cubic form is divisible by the square of one
of the linear forms.
This locus is the norm from $\mathbb{G}_m \times \mathbb{G}_m$ to $G$
of the locus where the square of the first linear form divides the cubic form (this latter locus
is only $(\mathbb{G}_m \times \mathbb{G}_m \times \gl_2)$-equivariant).

Away from the locus where one of the linear forms is zero, we have a
$(\mathbb{G}_m \times \mathbb{G}_m \times \gl_2)$-equivariant natural map
$\pp (V_1 \boxtimes W_{-2}) \to \pp V_1 \times \pp V_1$ induced by projection
from the subspaces $V_1 \boxtimes (L_{-2} \times 0)$ and $V_1 \boxtimes (0 \times L_{-2})$.
This induces a map
\begin{equation} \label{changeofhyp}
\pp V_3 \times \pp (V_1 \boxtimes W_{-2}) \to \pp V_3 \times \pp V_1 \times \pp V_1;
\end{equation}
the locus we want to excise can then be described as the pullback under this map
of the image of the map
\begin{equation} \label{21}
\pp V_1 \times \pp V_1 \times \pp V_1 \to \pp V_3 \times \pp V_1 \times \pp V_1
\end{equation}
defined by
\[(f, g, h) \mapsto (f g^2, g, h).\]

Since the pullback under \eqref{changeofhyp}
of the hyperplane class on the first $\pp V_1$ factor
differs from the hyperplane class on $\pp (V_1 \boxtimes W_{-2})$
by $2t_2$, we just need to push forward the generators
of $\chow^*([\pp V_1 \times \pp V_1 \times \pp V_1] / (\mathbb{G}_m \times \mathbb{G}_m \times \gl_2))$
along \eqref{21}, substitute the hyperplane class on $\pp V_3$ for $\alpha_1$,
substitute the hyperplane class on the first $\pp V_1$ factor for $-\alpha_1 - 2t_2$,
and take norms along $B(\mathbb{G}_m \times \mathbb{G}_m) \to BG$.
Moreover, since pullback along \eqref{21} is surjective by inspection,
the only generators we need to use are $1$ and $t_1$
(which generate $\chow^*(B(\mathbb{G}_m \times \mathbb{G}_m))$ as a module over $\chow^*(BG)$).

To implement this, we factor \eqref{21} as a composition
of a triple diagonal map on the middle $\pp V_1$ factor
\begin{equation} \label{35}
\pp V_1 \times \pp V_1 \times \pp V_1 \to \pp V_1 \times (\pp V_1 \times \pp V_1 \times \pp V_1) \times \pp V_1
\end{equation}
followed by multiplication
\begin{equation} \label{fmul}
(\pp V_1 \times \pp V_1 \times \pp V_1) \times \pp V_1 \times \pp V_1 \to \pp V_3 \times \pp V_1 \times \pp V_1.
\end{equation}

Write $x_i$ (with $1 \leq i \leq 5$) for the hyperplane class on the $i$th $\pp V$ factor.
From Lemma~\ref{diag3}, the pushforward of the fundamental class
\[x_2 x_3 + x_3 x_4 + x_4 x_2 - (x_2 + x_3 + x_4) \alpha_1 + \alpha_1^2 - \alpha_2.\]
Using Lemma~\ref{lm:mult} and \eqref{recv3}, and writing $x$ for the hyperplane class on $\pp V_1$
that we must substitute for $-\alpha_1 - 2t_2$,
the pushforward of the fundamental class along \eqref{fmul} is then
\begin{align*}
1 &\mapsto s_3^2 + 2x \cdot 2s_3^1 - 2\alpha_1 \cdot 2 s_3^1 - \alpha_1 x \cdot 6 s_3^0 + (\alpha_1^2 - \alpha_2) \cdot 6 s_3^0 \\
&= 3\alpha_2 + 2(-\alpha_1 - 2t_2) \cdot 2 \alpha_1 - 2\alpha_1 \cdot 2 \alpha_1 - \alpha_1 (-\alpha_1 - 2t_2) \cdot 6 + (\alpha_1^2 - \alpha_2) \cdot 6 \\
&= 4 \alpha_1 t_2 + 4 \alpha_1^2 - 3\alpha_2.
\end{align*}
Our relations are thus given by the vanishing of the
norms along $B(\mathbb{G}_m \times \mathbb{G}_m) \to BG$ of the classes
\[4 \alpha_1 t_2 + 4 \alpha_1^2 - 3\alpha_2 \tand (4 \alpha_1 t_2 + 4 \alpha_1^2 - 3\alpha_2) t_1.\]
Applying Lemma~\ref{pushbg}, these are
\[4 \alpha_1 (\beta_1 + \gamma) + 8 \alpha_1^2 - 6\alpha_2 = 4 \alpha_1 \beta_1 + 8 \alpha_1^2 - 6\alpha_2 + 2 \cdot 2\gamma = 4 \alpha_1 \beta_1 + 8 \alpha_1^2 - 6\alpha_2,\]
respectively
\begin{align*}
4 \alpha_1 \cdot 2 \beta_2 + (4 \alpha_1^2 - 3\alpha_2)(\beta_1 + \gamma) &= 8 \alpha_1 \beta_2 + 4 \alpha_1^2 \beta_1 - 3\alpha_2 \beta_1 + \alpha_2 \gamma + (2\alpha_1^2 - 2 \alpha_2) \cdot 2 \gamma \\
&= 8 \alpha_1 \beta_2 + 4 \alpha_1^2 \beta_1 - 3\alpha_2 \beta_1 + \alpha_2 \gamma.
\end{align*}
To summarize, the relations obtained by excising the triple root loci are thus:
\begin{align}
\label{reltrip}
\begin{split}
3\alpha_2 &= 0 \\
\alpha_1 \alpha_2 &= 0 \\
4 \alpha_1 \beta_1 + 8 \alpha_1^2 - 6\alpha_2 &= 0 \\
8 \alpha_1 \beta_2 + 4 \alpha_1^2 \beta_1 - 3\alpha_2 \beta_1 + \alpha_2 \gamma &= 0 \\
9 \alpha_1^2 + 10 \alpha_1 \beta_1 + \alpha_1 \gamma - 3 \alpha_2 + 12 \beta_2 &= 0 \\
\alpha_2 \gamma - 10 \alpha_1^3 - 12 \alpha_1^2 \beta_1 - 5 \alpha_1 \alpha_2 - 6 \alpha_2 \beta_1 - 16 \alpha_1 \beta_2 &= 0.
\end{split}
\end{align}

\subsection{Chow ring of our test family}
From our expressions for the tautological classes on $\mm$ in Section~\ref{subsec:taut},
we can express $\alpha_1$, $\beta_1$, and $\beta_2$ in terms of $\gamma$,
$\delta_1$, $\lambda_1$, and $\lambda_2$:
\begin{align*}
\alpha_1 &= -2 \lambda_1 + \delta_1 - \gamma \\
\beta_1 &= 3 \lambda_1 - 2 \delta_1 + 2\gamma \\
\beta_2 &= \lambda_2 - \alpha_1^2 - \alpha_1 \beta_1 \\
&= 2 \lambda_1^2 - 3 \lambda_1 \delta_1 + \delta_1^2 + 3 \lambda_1 \gamma - 2 \delta_1 \gamma + \gamma^2 + \lambda_2.
\end{align*}

Subtracting the relation
$4 \alpha_1^2 + 6 \alpha_1 \beta_1 + 4 \beta_1^2 + 2 \alpha_2 - 8 \beta_2 = 0$
(see \eqref{relzero})
from the relation
$3 \alpha_2 = 0$ (see \eqref{reltrip}),
we obtain
\[\alpha_2 = 4 \alpha_1^2 + 6 \alpha_1 \beta_1 + 4 \beta_1^2 - 8 \beta_2 = 2 \lambda_1 \delta_1 - 2 \lambda_1 \gamma - 8 \lambda_2.\]

Substituting these expressions into all of our previous relations
in \eqref{relzero}, \eqref{reltrip}, and Theorem~\ref{thm:bg},
we check the ideal the resulting relations generate is as follows:

\begin{thm} The Chow ring of the moduli space $B$ of bielliptic
curves is generated by $\gamma$, $\delta_1$, $\lambda_1$, and $\lambda_2$,
subject to the relations:
\begin{align*}
2 \gamma &= 0 \\
\gamma^2 + \lambda_1 \gamma &= 0 \\
\delta_1^2 + \delta_1 \gamma + 8 \lambda_1^2 - 12 \lambda_2 &= 0 \\
24 \lambda_1^2 - 48 \lambda_2 &= 0 \\
2 \delta_1^2 + 2 \lambda_1 \delta_1 &= 0 \\
20 \lambda_1 \lambda_2 - 4 \delta_1 \lambda_2 &= 0 \\
8 \lambda_1^3 - 8 \lambda_1 \lambda_2 &= 0.
\end{align*}
\end{thm}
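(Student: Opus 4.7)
My plan is to treat this as a presentation problem: start with the equivariant Chow ring $\chow^*(B(G \times \gl_2))$, quotient by all the relations already extracted from the excision steps, and then perform a change of variables to pass from the generators $(\alpha_1, \alpha_2, \beta_1, \beta_2, \gamma)$ to the desired generators $(\gamma, \delta_1, \lambda_1, \lambda_2)$.

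First I would observe that, by construction, $B$ is an open substack of the total space of the $(G \times \gl_2)$-representation $V_3(1) \oplus (V_1(-1) \boxtimes W_{-2})$, obtained by removing closed invariant loci (the two zero sections, the ``one linear form vanishes'' substack, and the three triple-root substacks). Hence $\chow^*(B)$ is the quotient of $\chow^*(B(G\times\gl_2)) \simeq \zz[\alpha_1, \alpha_2, \beta_1, \beta_2, \gamma]/(2\gamma,\, \gamma^2 + \beta_1\gamma)$ by the ideal generated by the fundamental classes of those loci, which by the excision calculations already performed in Sections~\ref{subsec:zero} and~\ref{subsec:triple} is exactly the ideal generated by the relations \eqref{relzero} and \eqref{reltrip}. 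So $\chow^*(B)$ has an explicit presentation in the old variables; the theorem is a purely algebraic claim about its presentation in the new variables.

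Next I would change variables using Section~\ref{subsec:taut}, which gives $\alpha_1 = -2\lambda_1 + \delta_1 - \gamma$, $\beta_1 = 3\lambda_1 - 2\delta_1 + 2\gamma$, and $\beta_2 = \lambda_2 - \alpha_1^2 - \alpha_1\beta_1$; these three identities let me eliminate $\alpha_1, \beta_1, \beta_2$ in favor of $(\gamma, \delta_1, \lambda_1, \lambda_2)$. For $\alpha_2$, I would combine the second relation of \eqref{relzero} with $3\alpha_2 = 0$ from \eqref{reltrip} to obtain $\alpha_2 = 4\alpha_1^2 + 6\alpha_1\beta_1 + 4\beta_1^2 - 8\beta_2 = 2\lambda_1\delta_1 - 2\lambda_1\gamma - 8\lambda_2$, which eliminates $\alpha_2$ as well. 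This proves generation by $(\gamma, \delta_1, \lambda_1, \lambda_2)$, and reduces the theorem to checking that the ideal in $\zz[\gamma, \delta_1, \lambda_1, \lambda_2]$ obtained by substituting these expressions into every relation of \eqref{relzero}, \eqref{reltrip}, $2\gamma = 0$, and $\gamma^2 + \beta_1\gamma = 0$ coincides with the ideal of the seven relations in the theorem statement. The two ``baby'' checks are already reassuring: $\gamma^2 + \beta_1\gamma$ reduces to $\gamma^2 + \lambda_1\gamma$ modulo $2\gamma$, and applying $3\alpha_2 = 0$ after substitution is consistent with the seven claimed relations.

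Finally I would carry out the ideal-matching verification in two directions: show each of the seven claimed relations is an $\zz[\gamma, \delta_1, \lambda_1, \lambda_2]$-linear combination of the substituted old relations, and show each substituted old relation lies in the ideal generated by the seven. The main obstacle is purely combinatorial: after substitution each of the relations in \eqref{relzero} and \eqref{reltrip} expands into a fairly long polynomial in four variables, and there are many of them. The sanest way to proceed is to use the low-degree relations ($2\gamma$, $\gamma^2 + \lambda_1\gamma$, $2\delta_1^2 + 2\lambda_1\delta_1$) to simplify repeatedly before attempting the higher-degree checks, and to stratify the check by total degree so that each relation is verified using only relations of equal or lower degree. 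A Gröbner basis computation over $\zz$ in $\zz[\gamma, \delta_1, \lambda_1, \lambda_2]$ (which is entirely finite and mechanical) provides a rigorous backstop that the two ideals agree; by hand one verifies the seven claimed relations by direct substitution, and verifies the converse direction degree by degree, where the delicate step is matching the top-degree Euler-class relation from \eqref{relzero} against the consequences of the $\mm$-relations combined with the new relation $\delta_1^2 + \delta_1\gamma + 8\lambda_1^2 - 12\lambda_2 = 0$ that is specific to the bielliptic locus.
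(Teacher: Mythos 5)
Your proposal is correct and follows essentially the same route as the paper: present $\chow^*(B)$ as the quotient of $\chow^*(B(G\times\gl_2))$ by the excision relations \eqref{relzero} and \eqref{reltrip}, eliminate $\alpha_1,\beta_1,\beta_2$ via the tautological-class formulas of Section~\ref{subsec:taut} and $\alpha_2$ via the same combination of the second relation of \eqref{relzero} with $3\alpha_2=0$, and then verify by substitution that the resulting ideal is the one claimed. The paper compresses this last verification into ``we check,'' so your explicit plan for the ideal-matching (degree stratification, Gr\"obner basis over $\zz$) simply fills in the mechanical step the paper leaves to the reader.
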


\noindent
In particular, reducing modulo $2$, we obtain
\[\chow^*(B) \otimes \zz/2\zz \simeq \zz[\gamma, \delta_1, \lambda_1, \lambda_2] / (\gamma^2 + \lambda_1 \gamma, \delta_1^2 + \delta_1 \gamma),\]
from which we conclude that the classes
\[\delta_1(\delta_1 + \lambda_1) \lambda_1^2, \quad \delta_1(\delta_1 + \lambda_1) \lambda_2, \tand \delta_1(\delta_1 + \lambda_1) (\lambda_1^2 + \lambda_2)\]
are all nonvanishing in $\chow^*(B) \otimes \zz/2\zz$, and thus in $\chow^*(\mm) \otimes \zz/2\zz$ as desired.

\bibliographystyle{alpha}
\bibliography{m2bar}

\end{document}